\newif\ifistechreport\istechreportfalse
\long\def\ifpaper#1{\ifistechreport\else#1\fi}
\newcommand{\Lop}{\mathcal{L}}
\newcommand{\Rop}{\mathcal{R}}
\newcommand{\dRop}{(\mathcal{R}\partial_n)}
\newcommand{\RopHat}{\hat{\mathcal{R}}}
\newcommand{\FEop}{\mathcal{F}}
\newcommand{\FEsolve}{\mathcal{U}}
\newcommand{\FEsolveh}{\mathcal{U}^h}
\newcommand{\applyFEsolve}[2]{\FEsolve \,[#1; \; #2]}
\newcommand{\applyFEsolveh}[2]{\FEsolveh \,[#1; \; #2]}
\newcommand{\applyblockFEsolve}[3]{\tensor{\FEsolve} \,[#1; \, #2; \, #3]}
\newcommand{\IErep}{\mathcal{A}}
\newcommand{\IEreph}{\mathcal{A}^h}
\newcommand{\IErepBdry}{\bar{\mathcal{A}}}
\newcommand{\genIEop}{\left\{CI + \bar{\mathcal{A}}\right\}}
\newcommand{\pd}[2]{\frac{\partial#1}{\partial#2}}
\newcommand{\SnglOp}{\mathcal{S}}
\newcommand{\SnglOpbd}{\bar\SnglOp}
\newcommand{\SpOp}{\mathcal{S}'}
\newcommand{\SpOpbd}{\bar\SpOp}
\newcommand{\DblOp}{\mathcal{D}}
\newcommand{\DblOpbd}{\bar\DblOp}
\newcommand{\DpOp}{\mathcal{D}'}
\newcommand{\DpOpbd}{\bar\DpOp}
\newcommand{\ee}[1]{$\times 10^{#1}$}
\newcommand{\dens}{\gamma}
\newcommand{\nvec}{\hat n}
\newcommand{\arrstretch}[1]{\renewcommand*{\arraystretch}{#1}}
\newcommand{\hvectwo}[2]{[ #1; \; #2 ]}
\newcommand{\vecdens}{\hvectwo{\dens^i}{\dens^e}}
\newcommand{\tensor}[1]{\underline{\underline{#1}}}
\newtheorem{theorem}{Theorem}
\newtheorem{lemma}{Lemma}
\definecolor{Red}{RGB}{118,9,9}
\definecolor{Green}{RGB}{54,123,0}
\definecolor{Blue}{RGB}{11,16,120}
\definecolor{Purple}{RGB}{65,0,120}
\definecolor{LightBlue}{RGB}{81,166,195}
\definecolor{Gold}{RGB}{211,185,60}
\definecolor{DarkGold}{RGB}{118,97,0}
\definecolor{Pink}{RGB}{210,53,164}
\definecolor{Magenta}{RGB}{118,0,81}
\definecolor{Teal}{RGB}{13,112,119}
\newcommand{\extArrowColor}{Blue}
\newcommand{\extHatColor}{Gold}
\newcommand{\intPatternColor}{black}
\newcommand{\intHatColor}{Green}
\newcommand{\intPatternType}{north east lines}
\newcommand{\infnorm}[2]{\|#2\|_{\infty;#1}}
\newcommand{\infnormvol}[1]{\infnorm{\Omega}{#1}}
\newcommand{\infnormbd}[1]{\infnorm{\partial\Omega}{#1}}
\newcommand{\autoinfnorm}[2]{\left\|#2\right\|_{\infty;#1}}
\newcommand{\twonorm}[2]{\|#2\|_{2;#1}}
\newcommand{\twonormvol}[1]{\twonorm{\Omega}{#1}}
\newcommand{\hie}{h_{\text{ie}}}
\newcommand{\hfe}{h_{\text{fe}}}
\newcommand{\lift}{\mathcal{E}}
\newcommand{\lifth}{\mathcal{E}^h}
\newcommand{\pqbx}{p_{\text{QBX}}}
\newenvironment{roomymat}{\left[\arrstretch{1.4}\begin{array}}{\end{array}\right]}
\begin{document}

\begin{frontmatter}

\title{High-order Finite Element--Integral Equation Coupling on Embedded~Meshes
    }

\author[mech]{Natalie N.~Beams}
\ead{natalie.beams@rice.edu}

\author[cs]{Andreas Kl\"ockner\corref{cor1}}
\ead{andreask@illinois.edu}

\author[cs]{Luke N.~Olson}
\ead{lukeo@illinois.edu}

\cortext[cor1]{Corresponding author}
\address[mech]{Department of Mechanical Science and Engineering, University of Illinois at Urbana-Champaign}
\address[cs]{Department of Computer Science, University of Illinois at Urbana-Champaign}

\begin{abstract}
This paper presents a high-order method
for solving an interface problem for the Poisson equation on embedded meshes
through a coupled finite element and integral equation approach.  The method is capable of handling
homogeneous or inhomogeneous jump conditions without modification and retains high-order
convergence close to the embedded interface. We present finite element-integral equation (FE-IE)
formulations for interior, exterior, and interface problems.
The treatments of the exterior and interface problems are new.
The resulting linear systems are solved through an iterative approach
exploiting the second-kind nature of the IE operator combined with algebraic multigrid
preconditioning for the FE part. Assuming smooth continuations of coefficients and
right-hand-side data, we show error analysis supporting high-order
accuracy. Numerical evidence further supports our claims of efficiency and high-order accuracy for smooth
data.
\end{abstract}

\begin{keyword}
  Interface problem {\sep} Fictitious domain {\sep}
Layer potential {\sep} FEM-IE coupling
  {\sep} Iterative methods {\sep} Algebraic Multigrid
\end{keyword}

  \end{frontmatter}

\section{Introduction}\label{sec:intro}
The focus of this work is on the following model interface problem for
the Poisson equation:
\begin{subequations}\label{eq:interface}
\begin{alignat}{2}
  -\triangle u(x) &=\,  f(x) &\quad& \textnormal{in $\Omega^i \cup \Omega^e$}\\
                          u^i(x) &=\,cu^e(x) + a(x) &\quad& \textnormal{on $\Gamma$}\\
                  \pd{u^i(x)}{n} &=\,  \kappa \pd{u^e(x)}{n} + b(x) &\quad& \textnormal{on $\Gamma$},
\end{alignat}
\end{subequations}
where two bounded domains $\Omega^i,\Omega^e\subset \mathbb R^d$ are separated by an interface
$\Gamma=\bar \Omega^i \cap \bar \Omega^e$ so that $\partial \Omega^i=\Gamma$.
The restriction of $u$ to domain $\Omega^\alpha$ is written
as $u^\alpha$ ($\alpha\in\{i,e\}$). We assume $\Omega^i$ has a smooth boundary.
Example domains are illustrated in
Figure~\ref{fig:interface}.  The forcing function $f$ may be
discontinuous across $\Gamma$, provided smooth extensions are
available in a large enough region across $\Gamma$, as
discussed in more detail in Section~\ref{sec:int-poisson}.  General interface
problems of this kind describe, e.g., steady-state diffusion in
multiple-material domains, and are closely related to problems from multi-phase
low Reynolds number flow, such as viscous
drop deformation and breakup~\cite{StoneReview1994}.  The presented method is usable and much of the related analysis
are valid in two or three dimensions. Numerical experiments in two dimensions support the validity of our claims; experiments in three dimensions are the subject of future investigation.
\begin{figure}[ht!]
\begin{center}
\begin{tikzpicture}[scale=2.5]
  \draw [fill=white] (-1,-1) rectangle (1,1);
  \draw [thick, black, fill=white] plot [smooth cycle] coordinates {(-0.3,-0.5) (0.5,-0.45) (0.2,0.5) (-0.45,0.3) (-0.2,-0.1)};
  \node at (0.1,-0.1) {$\Omega^i$};
  \node at (0.38, 0.52) {$\Gamma$};
  \node at (0.7, -0.8) {$\Omega^e$};
\end{tikzpicture}
\caption{  Two domains, $\Omega^i$ and $\Omega^e$, separated by interface $\Gamma$.
}\label{fig:interface}
\end{center}
\end{figure}
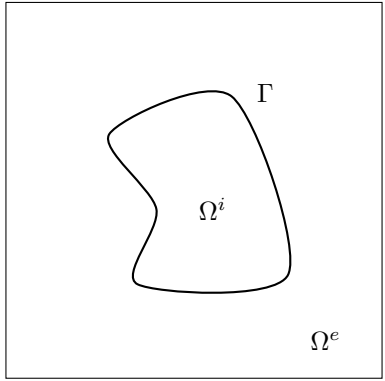

Though finite element methods offer great flexibility with respect to domain geometry, generating domain-conforming meshes is often difficult.  Furthermore, fully unstructured meshes may
require more computation for a given level of uniform accuracy than structured meshes, and
evaluation of the solution at non-mesh points is considerably more complicated than for regular Cartesian
grids.
Consequently, there is much interest in embedded
domain finite element methods, where the problem domain $\Omega$ is placed inside of a larger
domain $\hat{\Omega}$, which may be of any shape but is here chosen to be rectangular and discretized with a structured grid for numerical convenience.
The problem is recast on the new domain while still satisfying the boundary conditions on the original boundary $\partial\Omega$.
Examples of this type of approach include \emph{finite cell methods}~\cite{ParvizianFCM:2007, Kollmannsberger:2015},
which cast the problem in the form of a functional to be minimized, where the functional
is an integral only on the original domain $\Omega$.  These methods
require careful treatment of elements that are partially inside $\Omega$, especially
those containing only small pieces of $\Omega$.  The boundary conditions are enforced weakly using
Lagrange multipliers or penalty terms in the functional.
\emph{Fictitious domain methods}~\cite{Glowinski:1994} avoid special treatment of cut-cell elements by
extending integration outside $\Omega$ and extending the right-hand side as necessary.  This has also been developed
in the context of least-squares finite elements~\cite{ParussiniPediroda:2009}, where
Dirichlet boundary conditions are enforced with Lagrange multipliers.
A conceptual overview of these methods is given in
Figures~\ref{fig:method-fcm} and~\ref{fig:method-fdm}.
\begin{figure}[ht!]
\begin{minipage}[t]{0.3\textwidth}\begin{tikzpicture}[scale=2.5]
  \draw [black, thick, fill=gray!40] plot [smooth cycle] coordinates {(-0.7,-0.5) (0.5,-0.6) (0.6,0.8) (-0.45,0.3) (-0.4,-0.1)};
  \draw [step=0.2,black, very thin] (-1.,-0.8) grid (1.,1.);
  \node at (0.3,-0.1) {$\Omega$};
 \node at (0.7, -0.5) {$\hat\Omega$};
\end{tikzpicture}
\caption{  Finite cell methods use conforming integration within the domain $\Omega$, leading to cut cells.
  The shaded area indicates the domain of integration.
}\label{fig:method-fcm}
\end{minipage}
\hfill
\begin{minipage}[t]{0.3\textwidth}\begin{tikzpicture}[scale=2.5]
  \path [fill=gray!40] (-1., -0.8) rectangle (1.,1.);
  \draw [black, thick, fill=gray!40] plot [smooth cycle] coordinates {(-0.7,-0.5) (0.5,-0.6) (0.6,0.8) (-0.45,0.3) (-0.4,-0.1)};
  \draw [step=0.2,black, very thin] (-1.,-0.8) grid (1.,1.);
  \node at (0.3,-0.1) {$\Omega$};
  \node at (0.7, -0.5) {$\hat\Omega$};
\end{tikzpicture}
\caption{  Fictitious domain methods integrate in the entire introduced domain $\hat\Omega$.
  The shaded area indicates the domain of integration.
}\label{fig:method-fdm}
\end{minipage}
\hfill
\begin{minipage}[t]{0.3\textwidth}\begin{tikzpicture}[scale=2.5]
  \path [use as bounding box] (-1.,-0.8) rectangle (1.,1.);
  \path [pattern=north west lines, pattern color=red] plot [smooth cycle] coordinates
      {(-0.84,-0.6) (0.6,-0.72) (0.72,0.96) (-0.54,0.36) (-0.56,-0.12)};
  \path [fill=white] plot [smooth cycle] coordinates
      {(-0.5,-0.4) (0.40,-0.48) (0.48,0.64) (-0.36,0.24) (-0.28,-0.08)};
 \draw [black, thick] plot [smooth cycle] coordinates {(-0.7,-0.5) (0.5,-0.6) (0.6,0.8) (-0.45,0.3) (-0.4,-0.1)};
  \draw [step=0.2,black, very thin] (-1.,-0.8) grid (1.,1.);
  \node at (0.3,-0.1) {$\Omega$};
  \node at (-0.7, 0.5) {$\hat\Omega$};
\end{tikzpicture}
\caption{
  Immersed interface and immersed finite element methods modify stencils/basis functions near the boundary.
  The area where this might occur is hatched in the figure.
}\label{fig:method-ife}
\end{minipage}
\end{figure}

The \emph{immersed interface method (IIM)}~\cite{LeVequeLiIIM:1994} was introduced to solve elliptic interface problems with
discontinuous coefficients and singular sources using finite differences on a regular Cartesian
grid.  In the IIM, a finite difference stencil is modified to satisfy the interface conditions
at the boundary.  The immersed interface method is closely related is to the \emph{immersed boundary
method}~\cite{MittalIaccarino-Rev:2005}.  The IIM has also been extended to finite element
methods, often called \emph{immersed finite element} methods (IFEM)~\cite{LiIIM-FEM:1998, LiIFEM:2003}.
Similar to the IIM, the IFEM changes the finite element representation by
creating special basis functions that satisfy the interface conditions at
the interface.  The method has also been modified to handle non-homogeneous jump conditions~\cite{GongIFEM:2008, HeIFEM:2011}.  The family of immersed methods is represented by Figure~\ref{fig:method-ife}.

In this paper, we propose a combined finite element-integral equation (FE-IE) method
for solving interface problems such as~\eqref{eq:interface}.  Integral
equation (IE) methods excel at solving homogeneous equations: a solution is
constructed in the entire domain through an equation defined and solved only on
the boundary, resulting in a substantial cost savings over volume-discretizing
methods (including FEM) and reducing the difficulty of mesh generation.
Meanwhile, FE methods deal easily with inhomogeneous equations and other
complications in the PDE\@.  Based on these complementary strengths, the method
presented in this paper combines boundary IE and volume FE methods in a way
that retains the high-order accuracy achievable in both schemes.  As such, the novel contributions of this paper are as follows:
\begin{itemize}
  \setlength\itemsep{0em}
  \item Introduce high-order accurate, coupled FE-IE methods for three foundational problems: interior, exterior, and domains with interfaces;
  \item develop appropriate layer potential representations, leading to integral equations of the second kind;
  \item establish theoretical properties supporting existence and uniqueness of solutions to the various coupled FE-IE problems presented; and
  \item support the accuracy and efficiency of our methods with rigorous numerical tests involving the three foundational problem types.
\end{itemize}
Limitations of the current contribution include the need for smooth
continuation of the right-hand side $f$ across the interface in order
to obtain high-order accuracy, the fact that some of the theory and
our numerical experiments are limited to two dimensions at present
even if the scheme should straightforwardly generalize to three
dimensions in principle, and the need for smooth geometry and
convexity (for some results) to obtain theoretical guarantees of
high-order accuracy.

The literature includes examples of related coupling approaches
combining finite element methods with integral equations for irregular
domains, such as for example
work on the Laplace equation on domains with exclusions~\cite{celorrio_overlapped_2004}
(similar in spirit to the problem of Section~\ref{sec:int-ext}) with a
focus on Schwarz iteration,
work for transmission-type problems on the Helmholtz equation~\cite{dominguez_overlapped_2007},
work on the Stokes equations on a structured mesh~\cite{BirosStokesFE-IE:2003}
or on the Poisson and biharmonic equations~\cite{Mayo:1984}.
These approaches use
a layer potential representation that exists in both the actual domain
$\Omega$ and in $\hat{\Omega}\backslash\Omega$ and is discontinuous across the
domain boundary $\partial\Omega$.  Using known information about the discontinuity in
this representation and the derivatives at the boundary $\partial\Omega$,
modifications to the resulting finite-element stencil are calculated for the differential operator of the PDE
so that the integral representation is valid on the volume mesh.  The coupling
of FE and boundary integral or boundary element methods through a combined
variational problem has been applied to solving unbounded exterior problems,
e.g.\ Poisson~\cite{JohnsonNedelec1980, MeddahiEtAl1996},
Stokes~\cite{SequeiraStokes1983, MeddahiSayasStokes2000}, and wave
scattering~\cite{GaneshMorgenstern2016, HassellSayas2016}.
In contrast, the method of~\cite{RubergCirak:2010}
separates the solution into two additive parts: a finite
element solution found on the regular mesh, and an integral equation solution
defined by the boundary of the actual domain.
Unlike Lagrange multiplier methods or variational coupling, the
boundary conditions in this method are enforced exactly at every
discretization point on the embedded boundary, and no extra variables are
introduced or additional terms added to the finite element functional.
We follow the basic approach of~\cite{RubergCirak:2010} in this
contribution while improving on accuracy, layer potential representations,
and introducing efficient iterative solution approaches.

Unlike the XFEM family of methods~\cite{moes1999finite,belytschko2001arbitrary} which do not support
curvilinear interfaces without further work (e.g.~\cite{cheng2010higher}), the FE-IE method presented here
does not constrain the shape of the interface and does not require the creation of special basis functions to satisfy the boundary conditions; in fact,
the underlying computational implementations of the FE and IE solvers remain largely unchanged.  This is
especially advantageous when considering many different domains $\Omega$, as FE discretizations, matrices, and preconditioners
can be re-used.
The method also avoids the need to impose additional jump conditions to use higher order basis functions, as in~\cite{AdjeridLin:2009}, at the cost of a decreased accuracy for data that cannot be smoothly extended across the boundary.  The method handles
general jump conditions (given by $c$, $\kappa$, $a(x)$ and $b(x)$ in~\eqref{eq:interface})
in a largely unmodified manner.  Indeed, the functions $a(x)$
and $b(x)$ appear only in the right hand side of the problem.  This is achieved through considering a notional splitting of~\eqref{eq:interface}: First, an interior problem embedded in a rectangular fictitious
domain $\hat\Omega$, and second, a domain with an exclusion~---~i.e., identifying the interior domain $\Omega_i$ as the
excluded area~---~as illustrated in Figure~\ref{fig:interface-split}.  Each subproblem is then decomposed into an integral equation
part and a finite element part, with coupling necessary in the case of the domain with an exclusion.  Finally, the two subproblems are coupled through the interface conditions.
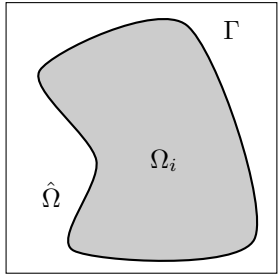
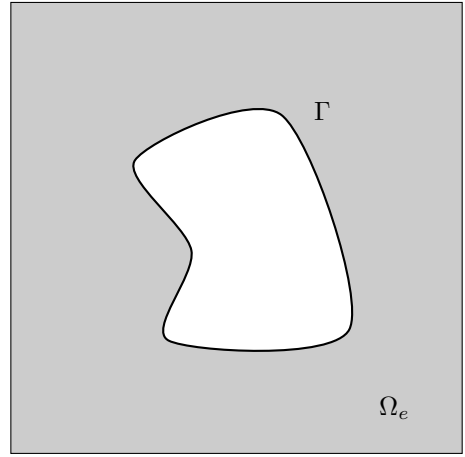
\begin{figure}[ht!]
\begin{center}
\begin{subfigure}{0.45\textwidth}
  \begin{center}
    \begin{tikzpicture}[scale=3]
    \draw [white,fill=white] (-1,-1) rectangle (1,1);
    \draw [fill=white] (-0.6,-0.6) rectangle (0.6,0.6);
    \draw [thick, black, fill=gray!40] plot [smooth cycle] coordinates {(-0.3,-0.5) (0.5,-0.45) (0.2,0.5) (-0.45,0.3) (-0.2,-0.1)};
    \node at (0.1,-0.1) {$\Omega_i$};
    \node at (0.4, 0.48) {$\Gamma$};
    \node at (-0.4, -0.25) {$\hat\Omega$};
    \end{tikzpicture}
  \caption{Interior problem on $\Omega_i$ embedded in a fictitious domain $\hat\Omega$.}\label{fig:interface-int}
  \end{center}
\end{subfigure}\hfill\begin{subfigure}{0.45\textwidth}
  \begin{center}
    \begin{tikzpicture}[scale=3]
    \draw [fill=gray!40] (-1,-1) rectangle (1,1);
    \draw [thick, black, fill=white] plot [smooth cycle] coordinates {(-0.3,-0.5) (0.5,-0.45) (0.2,0.5) (-0.45,0.3) (-0.2,-0.1)};
    \node at (0.38, 0.52) {$\Gamma$};
    \node at (0.7, -0.8) {$\Omega_e$};
    \end{tikzpicture}
  \caption{Problem in $\Omega_e$ is treated as a domain with an exclusion.}\label{fig:interface-int-ext}
  \end{center}
\end{subfigure}
\caption{Splitting of the domains of~\eqref{eq:interface} into
computational subdomains.}\label{fig:interface-split}
\end{center}
\end{figure}

The paper is organized as follows.
First, the FE-IE decomposition is developed for each type of subproblem, starting with the interior embedded domain problem in Section~\ref{sec:int-poisson}.
The form of the error is derived and the method is shown to achieve high-order accuracy.  Then, we present a new splitting for a domain with an exclusion in which the IE part is considered as a pure exterior problem.   This leads to a
a coupled system in Section~\ref{sec:int-ext}. Finally, in Section~\ref{sec:interface}, the interior and exterior subproblems are coupled to solve the interface problem~\eqref{eq:interface}
showing how to retain well-conditioned integral operators and second-kind integral equations in the resulting system of equations.

\subsection{Briefly on Integral Equation Methods}\label{sec:ie-intro}
To fix notation and for the benefit of the reader unfamiliar with
boundary integral equation methods, we briefly summarize the approach
taken by this family of methods when solving boundary value problems
for linear, homogeneous, constant-coefficient partial differential
equations.

Let
$\Gamma \subset \mathbb{R}^d$ ($d=2,3$) be a smooth, rectifiable curve.
For a scalar linear partial differential operator $\Lop$ with
associated free-space Green's function $G(x, x_0)$, the single-layer
and double-layer potential operators on a density function $\dens$ are
defined as
\begin{subequations}\label{eq:layerpots}
\begin{align}
\label{eq:slp-definition}
\SnglOp\dens(x) & = \int_{\Gamma} G(x,x_0) \dens(x_0) \,
dx_0,\\
\DblOp\dens(x) & =  \int_{\Gamma} \left(\nabla_{x_0} G(x,x_0) \cdot \nvec(x_0)\right) \dens(x_0) \, dx_0.
\end{align}
\end{subequations}
Here $\nabla_{x_0}$ denotes the gradient with respect to the variable of integration and $\nvec(x_0)$ is the outward-facing
normal vector. In addition, the normal derivatives
of the layer potentials are denoted
\[
\SpOp\dens(x)=\nvec(x)\cdot \nabla_{x}\SnglOp\dens(x),\quad\text{and}\quad
\DpOp\dens(x)=\nvec(x)\cdot \nabla_{x}\DblOp\dens(x),
\]
respectively. For the Laplacian $\triangle$ in two dimensions, $G(x,x_0) = -{(2 \pi)}^{-1} \log(x - x_0)$.
As $\DblOp\dens(x)$ and $\SpOp\dens(x)$ are discontinuous across the boundary $\partial\Omega$,
we use the notation $\DblOpbd\dens(x)$ and $\SpOpbd\dens(x)$ to denote the principal
value of these operators for target points $x\in\partial\Omega$.
Consider, as a specific example, the exterior Neumann problem in two dimensions for the Laplace
equation:
\[
  \triangle u(x) = 0\quad (x\in \mathbb R^d\setminus \Omega),\quad
  ( \nvec (x) \cdot \nabla u(y)) \to g\quad (x\in \partial\Omega, y\to x_+),\quad
  u(x)\to 0 \quad(x\to\infty),
\]
where $\lim_{y\to x_+}$ denotes a limit
approaching the boundary from the exterior of $\Omega$. By
choosing the integration surface $\Gamma$ in the layer potential as $\partial \Omega$ and representing the solution $u$ in terms of a single layer
potential $u(x)=\mathcal S\dens(x)$ with an unknown density function $\dens$, we obtain that
the Laplace PDE and the far-field boundary condition are satisfied by $u$.
The remaining Neumann boundary condition becomes, by way of the well-known
jump relations for layer potentials (see~\cite{Kress:2014})
the integral equation of the second kind
\[
  -\frac \dens2 + \mathcal{\bar S}'\dens = g.
\]
The boundary $\Gamma$ and density $\dens$ may then be discretized and, using the action of $\mathcal{\bar S}'$ and, e.g.\ an iterative solver, solved
for the unknown density $\dens$. Once $\dens$ is known, the representation
of $u$ in terms of the single-layer
potential~\eqref{eq:slp-definition} may be evaluated anywhere in
$\mathbb R^d\setminus\Omega$ to obtain the sought solution $u$ of the
boundary value problem.

\ifpaper{  In composing a representation of the solution to the
  interface problem out of single- and double-layer potentials, the
  objective is to obtain an integral equation of the second kind~---~i.e., of the form $(I+\mathcal A)\dens=g$, where $\mathcal A$ is a
  compact integral operator. The single- and double-layer
  operators, as operators on the boundary, are indeed compact.
  This typically results in benign conditioning that is independent of mesh size
  and allows the application of a Nyström
  discretization~\cite{Kress:2014}.
}

\section{Interior problems}\label{sec:int-poisson}
We base our discussion in this section on
a coupled finite element-integral equation (FE-IE)
method  presented in~\cite{RubergCirak:2010}, which, as described
there, achieves low-order accuracy. In this paper we
modify the discretization to
achieve high-order accuracy, with analysis and numerical data in
support.  In addition, we quantify the extent to which reduced smoothness in the data results in degraded accuracy.
We derive this combined method and present an error analysis for our implementation
of the method, demonstrating the convergence for problems of varying smoothness.

Assume a smooth domain $\Omega \subset \mathbb{R}^n$ and consider the boundary value problem
\begin{align}
-\triangle u(x) = &\, f \qquad x \in \Omega \nonumber \\
u(x) = & \, g \qquad x \in \partial\Omega.
\label{eq:int-poisson}
\end{align}
We introduce a domain $\hat\Omega$ such that $\Omega \subset \hat\Omega$
with $\partial \hat \Omega \cap \partial \Omega = \emptyset$.
We represent the solution of~\eqref{eq:int-poisson} as
\[
  u(x)= u_1(x)+u_2(x) \mathbf{1}_\Omega(x) \qquad x\in\hat \Omega,\\
\]
where $u_1$ is constructed as a finite element solution
obtained on the artificial larger domain $\hat\Omega$ and $u_2$
represents the integral equation solution defined in $\Omega$.
$\mathbf 1_\Omega(x)$ represents the indicator function that
evaluates to 1 if $x\in \Omega$ and $0$ otherwise.
If necessary, the indicator function may
be evaluated to the same accuracy as $u_2$ as the negative double
layer potential with the unit density~---~i.e.,
$\mathbf{1}_\Omega(x)=-\DblOp_{\partial\Omega} 1(x)$.

This yields two problems:
\begin{align}
\text{[FE]} \quad -\triangle u_1(x) & = f \quad x \in \hat\Omega & \quad \text{[IE]} \quad -\triangle u_2(x) & = 0 \qquad\quad x \in \Omega\nonumber \\
   u_1 & = 0  \quad x \in \partial\hat\Omega & u_2 & = g - u_1 \;\;\, x \in \partial\Omega.
\label{eq:int-split}
\end{align}
Because $u_1$ does not depend on $u_2$, the two problems may be solved with forward substitution.
Furthermore, the integral equation solution $u_2$ solves
the Laplace equation; consequently, the finite element solver alone
handles the right hand side of the original problem~\eqref{eq:int-poisson}.

In~\eqref{eq:int-poisson}, data for $f$ is only available on
$\Omega$. In~\eqref{eq:int-split} however, $f$ is assumed to be
defined on the entirety of the larger domain $\hat \Omega$. In many
situations (e.g., when a global expression for the right-hand side is available),
 this poses no particular problem. If a
natural extension of $f$ from $\Omega$ to $\hat\Omega$
is unavailable, it may be necessary to compute one.
In this case, the degree of smoothness of the resulting right-hand $f$
may become the limiting factor in the convergence of the
overall method.
A simple, linear-time (though non-local) method to obtain such
an extension involves the solution of an
(in this case) exterior Laplace Dirichlet problem, yielding an $f$
of class $C^0$. This may be
efficiently accomplished using layer potentials~\cite{askham2017adaptive}.
The use of a biharmonic problem yields a smoother $f\in C^1$, albeit at
greater cost. Below, we show convergence data for various degrees
of smoothness of $f$ but otherwise leave this issue to future work.
\subsection{Finite element formulation}
First, we solve the FE problem, which, in the form of~\eqref{eq:int-split}, is not coupled to the IE part.
The weak form of~\eqref{eq:int-split} requires finding a
\begin{equation}
  u_1 \in H^1_0(\hat\Omega) \text{ such that}\quad
   \FEop(v)\, u_1   =    \mathcal{M}(v)\,f    \quad \forall  v \in H^1_0(\hat\Omega),
  \label{eq:int-weak-poisson}
\end{equation}
where $\FEop$ and $\mathcal{M}$ are defined for any $v \in H^1_0(\hat\Omega)$ as
\begin{equation*}
  \FEop(v)\, u_1 = \int_{\hat\Omega} \nabla u_1 \cdot \nabla v  \, dV \qquad \text{and} \qquad  \mathcal{M}(v)\,f = \int_{\hat\Omega} fv \, dV.
\end{equation*}
For the discrete FE solution of the continuous weak problem (\ref{eq:int-weak-poisson}),
we consider a Galerkin formulation with $u_1$, $v \in V^h \subset H^1_0(\hat\Omega)$.

\subsection{Layer potential representation}
We represent $u_2 = \DblOp\dens$ in terms of the double layer potential of an unknown density $\dens$.

\ifpaper{The resulting integral equation problem in~\eqref{eq:int-split} is}
\begin{equation}
  \left\{-\frac{1}{2}I +\DblOpbd \right\} \dens = g - u_1\rvert_{\partial\Omega}.
  \label{eq:int-ie}
\end{equation}
This operator is known to have a trivial nullspace and thus we are guaranteed
existence and uniqueness of a density solution $\dens\in C^0(\partial\Omega)$ for $g-u_1\rvert_{\partial\Omega} \in C^0(\partial\Omega)$
by the Fredholm alternative for a sufficiently smooth curve $\partial\Omega$~\cite{Kress:2014}.

For concreteness, we next discuss the Nystr\"om discretization for
this problem type, omitting analogous detail for subsequent problem
types.
We fix a family of composite quadrature rules with weights $w_{h,i}$
and nodes $\xi_{h,i} \subset \partial\Omega$ parametrized by the element size
$h$ so that
\[
  \left|
  \sum_{i=1}^{n_h} w_{h,i,x} K^{\IErep}(x,\xi_{h,i}) \dens(\xi_{h,i})
  -
  \int_{\partial\Omega} K^{\IErep}(x,\xi) \dens(\xi) d\xi
  \right|
  \le C h^q
\]
for a kernel $K^{\IErep}$ associated with an integral operator
$\IErep$ and densities $\dens\in C^q(\partial\Omega)$.
(We use this notation throughout, e.g.\ we use $K^{\DblOp}$ to
denote the kernel of the double layer potential $\DblOp$.)
We let
\[
  \IErep^h \dens^h=
  \sum_{i=1}^{n_h} w_{h,i,x} K^{\IErep}(x,\xi_{h,i}) \dens(\xi_{h,i})
\]
for general layer potential operators $\IErep$.
Using a conventional Nyström discretization, the unknown discretized density
\begin{equation}
  \dens^h={[\dens^h(\xi_{h,1}), \dots, \dens^h(\xi_{h,N_h})]}^T
  \label{eq:discrete-density}
\end{equation}
satisfies the
linear system given by
\begin{equation}
  -\frac 12 \dens^h(\xi_{h,i})
  +
  \sum_{j=1}^{N_h} w_{h,j,\xi_{h,i}}  K^{\DblOp}(\xi_{h,i},\xi_{h,j}) \dens^h(\xi_{h,j})
  =g(\xi_{h,i})-u_1(\xi_{h,i}).
  \label{eq:nystrom-linsys}
\end{equation}
Once $\dens^h$ is known, the solution $u_2$ can be computed as
\begin{equation}
  u_2(x)=\sum_{j=1}^{N_h} w_{h,j,x}  K^{\DblOp}(x,\xi_{h,j}) \dens^h(\xi_{h,j}).
  \label{eq:int-ie-volume-eval}
\end{equation}

We note that, although the density is numerically represented only in terms of
pointwise degrees of freedom, $\dens^h$ can be extended to a
function defined everywhere on $\partial\Omega$ by making use of the fact that it
solves an integral equation of the second kind~\eqref{eq:int-ie},
yielding
\[
  \dens^h(x)
  =
  2\sum_{j=1}^{N_h} w_{h,j,x}  K^{\DblOp}(x,\xi_{h,j}) \dens^h(\xi_{h,j})
  -2(g(x)-u_1(x)),
\]
while, on account of~\eqref{eq:nystrom-linsys}, agreeing with the
prior definition of $\dens^h$.
\subsection{Error analysis}In this section we establish a
decoupling estimate that
allows us to express the error in the overall solution in terms of the
errors achieved by the IE and FE solutions to their
associated sub-problems given in~\eqref{eq:int-split}.  For notational
convenience, we introduce $r_1 = u_1\rvert_{\partial\Omega}$
for the restriction of $u_1$ to the boundary of $\Omega$ and
$r_1^h = u_1^h\rvert_{\partial\Omega}$ for the restriction 
of the approximate solution $u_1^h$ in the finite element
subspace $V^h \subset H^1_0(\hat\Omega)$.
\begin{lemma}[Decoupling Estimate]  \label{lem:int-decoupling}  Suppose $\partial\Omega$ is a sufficiently smooth bounding curve,
  and let
  \[
    u^h(x)= u_1^h(x)+u_2^h(x) \mathbf{1}_\Omega(x) \qquad x\in\hat \Omega,\\
  \]
  where $u_1^h$ solves the variational problem~\eqref{eq:int-weak-poisson}
  and where $u_2^h$ is the potential obtained by solving~\eqref{eq:nystrom-linsys} and computed according to~\eqref{eq:int-ie-volume-eval}.
  Further suppose that the family of discretizations $\DblOpbd^h$ is
  collectively compact and pointwise convergent.
  Then the overall solution error satisfies
  \begin{equation}\label{eq:decoupling}
    \infnormvol{u-u^h} \le
    C \left( \infnormvol{u_1-u_1^h }
    +\infnormvol{\DblOp - \DblOp^h}\infnormbd{\dens}
    +\infnormbd{g-g^h}
    +\infnormbd{r_1-r_1^h}
      \right),
  \end{equation}  for a constant $C$ independent of the mesh size $h$ or other
  discretization parameters, as soon as $h$ is sufficiently small.
  In~\eqref{eq:decoupling}, $\gamma$ refers to the solution of
  the integral equation~\eqref{eq:int-ie}.
\end{lemma}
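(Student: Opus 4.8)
\section*{Proof proposal}

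The plan is to reduce the bound on the total solution error to three separable pieces~---~the finite-element error, the quadrature error in the double-layer evaluation, and the error in the Nyström density~---~and then to control the density error through the collective-compactness stability theory associated with the second-kind equation \eqref{eq:int-ie}. First I would split the error using the defining representations of $u$ and $u^h$. Since $\mathbf{1}_\Omega \equiv 1$ on $\Omega$,
\[
  u - u^h = (u_1 - u_1^h) + (u_2 - u_2^h)\,\mathbf{1}_\Omega,
\]
so that $\infnormvol{u - u^h} \le \infnormvol{u_1 - u_1^h} + \infnormvol{u_2 - u_2^h}$, and the first summand already appears in \eqref{eq:decoupling}. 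Writing $u_2 = \DblOp\dens$ and $u_2^h = \DblOp^h\dens^h$ and inserting the intermediate term $\DblOp^h\dens$ gives
\[
  u_2 - u_2^h = (\DblOp - \DblOp^h)\dens + \DblOp^h(\dens - \dens^h).
\]
The first term is bounded by $\infnormvol{\DblOp - \DblOp^h}\,\infnormbd{\dens}$, matching the quadrature term in the estimate. For the second term I would invoke a uniform bound $\|\DblOp^h\| \le C$ on the discrete volume-evaluation operator (from $C(\partial\Omega)$ to $L^\infty(\Omega)$), thereby reducing matters to the density error $\infnormbd{\dens - \dens^h}$.

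Next I would estimate $\dens - \dens^h$. Let $A = -\tfrac12 I + \DblOpbd$ and $A^h = -\tfrac12 I + \DblOpbd^h$ denote the continuous and discrete operators of \eqref{eq:int-ie} and \eqref{eq:nystrom-linsys}, with right-hand sides $\phi = g - r_1$ and $\phi^h = g^h - r_1^h$. The operator $A$ is invertible on $C(\partial\Omega)$ by the Fredholm alternative (trivial nullspace), as already noted below \eqref{eq:int-ie}. Because the family $\DblOpbd^h$ is assumed collectively compact and pointwise convergent, Anselone's perturbation theory guarantees that, for $h$ sufficiently small, the $A^h$ are invertible with inverses bounded uniformly in $h$. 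Using $A\dens = \phi$ and $A^h\dens^h = \phi^h$ I would write
\[
  \dens - \dens^h = (A^h)^{-1}\big[(A^h - A)\dens + (\phi - \phi^h)\big] = (A^h)^{-1}\big[(\DblOpbd^h - \DblOpbd)\dens + (\phi - \phi^h)\big],
\]
where the $-\tfrac12 I$ contributions cancel. Taking norms, using the uniform bound on $(A^h)^{-1}$ together with $\infnormbd{\phi - \phi^h} \le \infnormbd{g - g^h} + \infnormbd{r_1 - r_1^h}$, yields
\[
  \infnormbd{\dens - \dens^h} \le C\big(\infnormbd{(\DblOpbd - \DblOpbd^h)\dens} + \infnormbd{g - g^h} + \infnormbd{r_1 - r_1^h}\big).
\]
Assembling the pieces then gives \eqref{eq:decoupling}, provided the on-surface consistency error $\infnormbd{(\DblOpbd - \DblOpbd^h)\dens}$ is absorbed into the volume quadrature term $\infnormvol{\DblOp - \DblOp^h}\,\infnormbd{\dens}$; I would justify this absorption by relating the principal-value operator on $\partial\Omega$ to the interior trace of the volume potential via the standard jump relation for the double layer.

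I expect the main obstacle to be the stability step~---~establishing that the discrete operators $A^h$ are \emph{uniformly} invertible. This is precisely where the collective-compactness and pointwise-convergence hypotheses enter, through Anselone's theorem, and it is what makes the constant $C$ independent of $h$ once $h$ is small. A secondary technical point, which I would treat as a property furnished by the near-boundary (e.g.\ QBX) evaluation scheme, is the uniform boundedness of the discrete volume-evaluation operator $\DblOp^h$ up to the boundary, since the double-layer kernel $K^{\DblOp}(x,\xi)$ degenerates as $x \to \partial\Omega$.
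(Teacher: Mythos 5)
Your proposal is correct and follows essentially the same route as the paper's own proof: triangle inequality, the splitting $u_2-u_2^h=(\DblOp-\DblOp^h)\dens+\DblOp^h(\dens-\dens^h)$, uniform invertibility of $-\tfrac12 I+\DblOpbd^h$ via Anselone's theorem under the collective-compactness hypothesis, and uniform boundedness of $\DblOp^h$ to assemble the terms. Even the point you flag as delicate~---~absorbing the on-surface consistency error $\infnormbd{(\DblOpbd-\DblOpbd^h)\dens}$ into the volume quadrature term $\infnormvol{\DblOp-\DblOp^h}\infnormbd{\dens}$~---~is handled the same way (in fact, silently) in the paper, so your treatment is if anything slightly more careful.
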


The purpose of Lemma~\ref{lem:int-decoupling} is to reduce the error
encountered in the coupled problem to a sum of errors of boundary
value problems each solved by a single, uncoupled method, so that
standard FEM and IE error analysis techniques apply to each part.

\begin{proof}
By the triangle inequality,
\begin{equation}
  \infnormvol{u-u^h } \leq \infnormvol {u_1-u_1^h } +  \infnormvol{u_2-u_2^h}.
\label{eq:int-err-total-general}
\end{equation}
First consider $\infnormvol{ u_2-u_2^h}$ in the IE solution, which we bound with
\begin{equation}
   \infnormvol{u_2-u_2^h}
   \le
   \infnormvol{(\DblOp - \DblOp^h) \dens}
   +
   \infnormvol{\DblOp^h (\dens - \dens^h)}.
   \label{eq:int-eval-plus-dens}
\end{equation}
To estimate the second term, we make use of the fact that
$-1/2I+\DblOpbd$ is has no nullspace~\cite{Kress:2014} and is thereby
invertible by the Fredholm Alternative. For a sufficiently small
$h$, and because the $\DblOpbd^h$ is
collectively compact and pointwise convergent, Anselone's
Theorem~\cite{anselone1964approximate} yields invertibility of the
discrete operator
$(-I/2+\DblOpbd^h)$ as well as the estimate
\[
  \infnormbd{\dens - \dens^h} \le 2C' \infnormbd{(\DblOpbd^h-\DblOpbd)\dens} +\infnormbd{(g-r_1)-(g^h-r_1^h)}
\]
where
\[
  C'=
  \frac{1+2\infnormbd{{(I-2\DblOpbd)}^{-1}\DblOpbd^h}}
  {1-4\infnormbd{{(I-2\DblOpbd)}^{-1}(\DblOpbd^h-\DblOpbd)\DblOpbd^h}},
\]
which is bounded independent of discretization parameters
once $h$ (and thus $(\DblOpbd^h-\DblOpbd)$) is small enough.
Using submultiplicativity and gathering terms in~\eqref{eq:int-eval-plus-dens}
yields
\begin{equation*}
  \infnormvol{u_2-u_2^h}
  \le
  (1+2C'\infnormvol{\DblOp^h})\infnormvol{\DblOp - \DblOp^h}\infnormbd{\dens}
  +
  \infnormvol{\DblOp^h}
  \infnormbd{(g-g^h)-(r_1-r_1^h)}.
\end{equation*}
Because $\infnormvol{\DblOp^h}$ is bounded independent of
$h$ by assumption and with some constant $C$, we find
\begin{equation}
  \infnormvol{u_2-u_2^h}
  \le
  \\
  C\big(
  \infnormvol{\DblOp - \DblOp^h}\infnormbd{\dens}
  +\infnormbd{g-g^h}
  +\infnormbd{r_1-r_1^h}
  \big),
\end{equation}
allowing us to bound
$\infnormvol{u_2-u_2^h}$ in terms of the quadrature error $\infnormvol{\DblOp - \DblOp^h}$
of the numerical layer potential operator as well as the interpolation
error $\infnormbd{g-g^h}$ and the FEM evaluation error $\infnormbd{r_1-r_1^h}$. The latter,
along with $\infnormvol {u_1-u_1^h }$ is controlled by
conventional $L^\infty$ FEM error bounds, for example the
contribution~\cite{haverkamp_aussage_1984} (2D) or the recent
contribution~\cite[(5) and Thm. 12]{leykekhman_finite_2016} (3D).
These references provide bounds that are applicable
with minimal smoothness assumptions on $f$ and homogeneous Dirichlet
BCs as in~\eqref{eq:int-split}. They apply generally
on convex polyhedral domains, a setting that is well-adapted to our
intended application (cf. Figure~\ref{fig:interface}).
\end{proof}

Analogous bounds can be derived in Sobolev spaces, specifically the
$H^{1/2}(\partial \Omega)$ norm on the boundary and the $H^1(\Omega)$,
which in turn can be related to $L^2$ the norms included in the
results of the numerical experiments described below.
\subsection{Numerical experiments}\label{sec:int-num-results}The method we develop in this paper is
a combined FE-IE solver that improves on the approach in~\cite{RubergCirak:2010} in a number of
ways. First, we make use of a representation of the solution that
gives rise to an integral equation of the second kind, leading to
improved conditioning and the applicability of the Nystr\"om method.
Second, we use high-order accurate quadrature
for the evaluation of the layer potentials, leading to improved
accuracy. The earlier method~\cite{RubergCirak:2010} uses a coordinate
transformation to remove the singularity and employs adaptive quadrature for points near the singularity.
We make use of quadrature by expansion (QBX)~\cite{KloecknerQBX:2013} using the \texttt{pytential}~\cite{pytential-github} library.
QBX evaluates layer potentials on and off the source surface by
exploiting the smoothness of the potential to be evaluated. It forms
local/Taylor expansions of the potential off the source surface
using the fact that the coefficient integrals are non-singular.
Compared with the classical singularity removal method based on polar coordinates,
QBX is more general in terms of the kernels it can handle, and it
unifies on- and off-surface evaluation of layer potentials.
It is also naturally amenable to acceleration via fast algorithms~\cite{gigaqbx2d}.
The finite element terms are evaluated in standard $Q^n$ spaces using the
\texttt{deal.II} library~\cite{dealII84,
BangerthHartmannKanschat2007}.

We consider three different right-hand sides with varying smoothness: a
constant function, a $C^0$ piecewise bilinear function, and a piecewise constant
function.
\begin{align}
  f_\text{c}(x,y) &= 1,
  \label{eq:rhs0}\\
  f_\text{bl}(x,y) &= \xi(x)\xi(y),
   \, \text{where} \,\,
   \xi(z) =
      \begin{cases}
       \phantom{-}\frac{5}{3}z + 1 & \text{if} \quad z \leq 0, \\
        -\frac{5}{3}z + 1 & \text{if} \quad z > 0,\\
     \end{cases}
     \label{eq:rhs1}\\
   f_\text{pw}(x,y) &= \eta(x)\eta(y),
   \, \text{where} \,\,
  \eta(z) =
      \begin{cases}
        -1 & \text{if} \quad z \leq 0, \\
        \phantom{-}1 & \text{if} \quad z > 0. \\
     \end{cases}
    \label{eq:rhs2}
 \end{align}
These cases are selected to expose different levels of
regularity in the problem.  A classical $C^2$ solution is expected
from $f_\text{c}$, while $f_\text{bl}$ and $f_\text{pw}$ admit
solutions only in $H^3$ and $H^2$, respectively.

All problems are defined on the domain
$\Omega = \{x: |x|_2 \leq 0.5\}$, a circle of radius $0.5$.
In addition, the domain is embedded in a square domain $\hat\Omega = [-0.6, -0.6] \times [0.6, 0.6]$, as illustrated in Figure~\ref{fig:int-solution} along with
the solution obtained when using the right-hand side $f_\text{c}$.
\begin{figure}[ht!]
\begin{center}
\begin{subfigure}{0.3\textwidth}
\begin{center}
  \begin{tikzpicture}[scale=1.5]
        \path [fill=white,thick](-1.5, -2) rectangle (1.5,2);
    \draw [fill=\intHatColor!40,thick](-1.2,-1.2) rectangle (1.2,1.2);
    \draw[pattern=\intPatternType, thick, pattern color=\intPatternColor] (0., 0.) circle [radius=1.0];
     \node at (0,0) {$\Omega$};
     \node at (1.0, -0.8) {$\hat\Omega$};
  \end{tikzpicture}
  \caption{FE domain $\hat\Omega =[-0.6,-0.6]\times[0.6,0.6]\;$  surrounding the true domain $\Omega$, where $u_2$ is defined.}
\end{center}
\end{subfigure}\hfill\begin{subfigure}{0.65\textwidth}
\begin{center}
\includegraphics[width=0.95\textwidth]{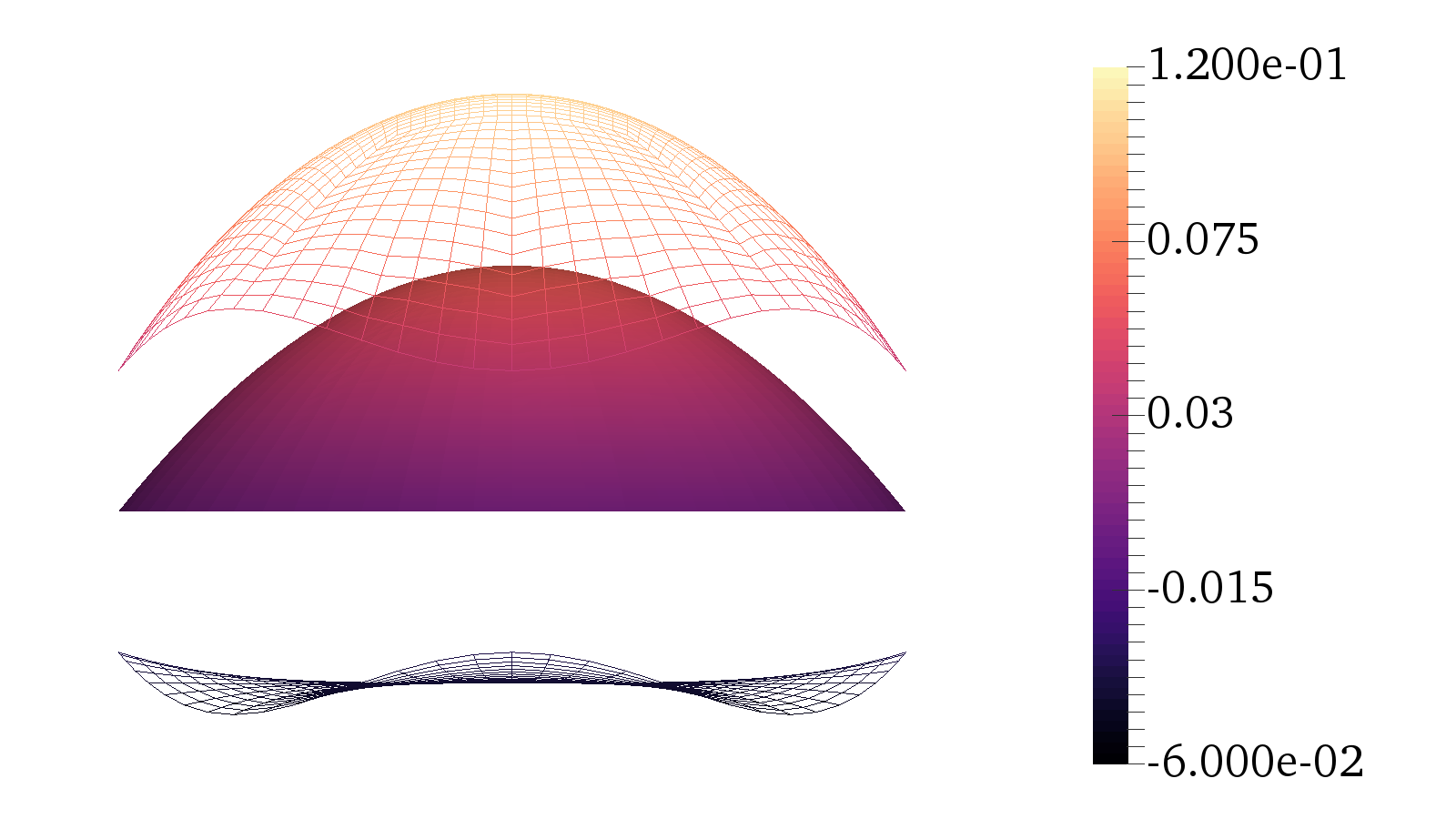}
\caption{FE solution (top wireframe), IE solution (bottom wireframe), and total solution (solid) with right hand side~\eqref{eq:rhs0}.}
\end{center}
\end{subfigure}
\caption{  Solution domains and sample solution for an interior embedded mesh calculation.
}\label{fig:int-solution}
\end{center}
\end{figure}

Table~\ref{tab:rhsconv} reports the self-convergence error in the finite element and integral equation portions of the solution for each test case compared to a fine-grid
solution whose parameters are given.  We see that the method exhibits the expected order of accuracy given the smoothness of the data.
In particular, the method is high-order \emph{even near the embedded boundary}, in contrast with standard
immersed boundary methods.
The degree of the FE polynomials space $p$ and the truncation order
$\pqbx$ in Quadrature by Expansion are chosen so as to yield
equivalent orders of accuracy in the solution--for instance $p=1$ and
$\pqbx=2$ yield second-order accurate approximations~\cite{brenner_mathematical_2013,Epstein:2013}.
In the lower-smoothness test cases, we note a marked difference
between the error in the $\infty$- and the 2-norm of the error, both
shown.  Also, for linear basis functions, the finite element
convergence rate in $\infnormvol{\cdot}$  is suboptimal.
This matches analytical expectations as known error estimates of the
error in this norm for this basis include a factor of $\log(1/h)$~\cite{Scott:1976,
Schatz:1980}.
\begin{table}[ht!]
\begin{center}
\caption{Self-convergence to a fine mesh solution $u_{\text{ref}}$ vs.\ smoothness of the right-hand side for the interior problems of Section~\ref{sec:int-num-results}.  EOC refers to the empirical order of convergence.}
\label{tab:rhsconv}
\begingroup
\def\arraystretch{1.2}
\begin{tabular}{ c  c c  c c c c c }
 \multicolumn{3}{c}{\emph{Case}}  & \multicolumn{5}{c}{\emph{Error / Convergence}}  \\
   RHS func.      &       $p$          &    $\pqbx$    &$\hfe$, $\hie$& $\infnormvol{u-u_{\text{ref}} }$& EOC  & $\twonormvol{u-u_{\text{ref}} }$& EOC \\
\toprule
                                     & \multirow{3}{*}{1} & \multirow{3}{*}{2} & 0.04, 0.105    &  4.564\ee{-4}  & --  &  9.576\ee{-5} & --  \\
                                    &                    &                    & 0.02, 0.052    &  9.333\ee{-5}  & 2.3 &  1.975\ee{-5} & 2.3 \\
                                    &                    &                    & 0.01, 0.026    &  1.812\ee{-5}  & 2.4 &  3.982\ee{-6} & 2.3 \\
\cline{2-8}
\multirow{2}{*}{$f_\text{c}$}        & \multirow{3}{*}{2} & \multirow{3}{*}{3} & 0.04, 0.105    &  9.383\ee{-5}  & --  &  2.189\ee{-5} & --  \\
                                   &                    &                    & 0.02, 0.052    &  1.032\ee{-5}  & 3.2 &  2.414\ee{-6} & 3.2 \\
       &                    &                    & 0.01, 0.026    &  8.840\ee{-7}  & 3.5 &  2.052\ee{-7} & 3.6 \\
\cline{2-8}
                                    & \multirow{3}{*}{3} & \multirow{3}{*}{4} & 0.04, 0.105    &  2.405\ee{-5}  & --  &  6.072\ee{-6} & --  \\
                                    &                    &                    & 0.02, 0.052    &  1.510\ee{-6}  & 4.9 &  3.727\ee{-7} & 4.0 \\
                                    &                    &                    & 0.01, 0.026    &  6.887\ee{-8}  & 4.5 &  1.672\ee{-8} & 4.5 \\
\midrule
                                    & \multirow{3}{*}{1} & \multirow{3}{*}{2} & 0.04, 0.105    &  1.091\ee{-4}  &  -- &  3.040\ee{-5} & --  \\
                                    &                    &                    & 0.02, 0.052    &  2.110\ee{-5}  & 2.4 &  6.377\ee{-6} & 2.3 \\
                                    &                    &                    & 0.01, 0.026    &  4.201\ee{-6}  & 2.3 &  1.555\ee{-6} & 2.0 \\
\cline{2-8}
\multirow{3}{*}{$f_\text{bl}$}                  & \multirow{3}{*}{2} & \multirow{3}{*}{3} & 0.04, 0.105    &  2.294\ee{-5}  &  -- &   5.736\ee{-6} & --  \\
                &                    &                    & 0.02, 0.052    &  2.453\ee{-6}  & 3.2 &   6.375\ee{-7} & 3.2 \\
                       &                    &                    & 0.01, 0.026    &  2.219\ee{-7}  & 3.5 &   5.424\ee{-8} & 3.6 \\
\cline{2-8}
      & \multirow{3}{*}{3} & \multirow{3}{*}{4} & 0.04, 0.105    &  5.594\ee{-6}  &  -- &   1.597\ee{-6} & --  \\
                                   &                    &                    & 0.02, 0.052    &  4.103\ee{-7}  & 3.8 &   9.799\ee{-8} & 4.0 \\
                                   &                    &                    & 0.01, 0.026    &  2.396\ee{-8}  & 4.1 &   4.417\ee{-9} & 4.5 \\
\midrule

                                    & \multirow{3}{*}{1} & \multirow{3}{*}{2} & 0.04, 0.105    &  4.918\ee{-4}  &  -- &   1.008\ee{-4} & --  \\
                                    &                    &                    & 0.02, 0.052    &  1.882\ee{-4}  & 1.4 &   2.475\ee{-5} & 2.0 \\
                                    &                    &                    & 0.01, 0.026    &  5.620\ee{-5}  & 1.7 &   4.222\ee{-6} & 2.6 \\
\cline{2-8}
\multirow{3}{*}{$f_\text{pw}$}                  & \multirow{3}{*}{2} & \multirow{3}{*}{3} & 0.04, 0.105    &  2.811\ee{-4}  &  -- &   2.417\ee{-5} & --  \\
                   &                    &                    & 0.02, 0.052    &  9.098\ee{-5}  & 1.6 &   3.583\ee{-6} & 2.8 \\
     &                    &                    & 0.01, 0.026    &  2.489\ee{-5}  & 1.9 &   4.847\ee{-7} & 2.9 \\
\cline{2-8}
                                    & \multirow{3}{*}{3} & \multirow{3}{*}{4} & 0.04, 0.105    &  1.775\ee{-4}  &  -- &   9.453\ee{-6} & --  \\
                                    &                    &                    & 0.02, 0.052    &  4.730\ee{-5}  & 1.9 &   1.294\ee{-6} & 2.9 \\
                                    &                    &                    & 0.01, 0.026    &  1.219\ee{-5}  & 2.0 &   1.594\ee{-7} & 3.0 \\
\bottomrule
 \textit{--- all ---}                         &      4          & 5                  &  0.005, 0.013 &  \multicolumn{4}{c}{\textit{--- reference solution parameters ---}} \\
\bottomrule
\end{tabular}
\endgroup
\end{center}
\end{table}

\section{FE-IE for domains with exclusions}\label{sec:int-ext}As the next step toward solving the interface problem (\ref{eq:interface}), we extend our FE-IE method to a domain with an exclusion as shown in Figure~\ref{fig:interface-split}.
In contrast to the \textit{interior} Poisson problem,
the solution is sought on the intersection of the
unbounded domain $\mathbb{R}^2\setminus \Omega$ and the bounded domain
$\hat\Omega$.  That is,
\begin{align}
\label{eq:int-ext-bvp}
 -\triangle u(x) = &\, f(x) \qquad x \in \hat\Omega\backslash\Omega \nonumber, \\
                    u(x) = &\, g(x) \qquad x \in \partial\Omega \nonumber, \\
                    u(x) = &\, \hat g(x) \qquad x \in \partial\hat\Omega.
\end{align}
The generalization to other boundary conditions is left to future work.

Our new approach to
FE-IE decomposition for this problem is to solve an interior finite element problem on $\hat\Omega$ and an exterior integral equation problem on $\Omega$,
with the two solutions coupled only through their boundary values.  The setup for this problem is illustrated symbolically  in Figure~\ref{fig:int-ext-domain}.
\begin{figure}[!ht]
\begin{subfigure}{0.3\textwidth}
\begin{center}
\resizebox{0.8\textwidth}{!}{
  \begin{tikzpicture}
      \path [fill=white] (-2,-2) rectangle (2,2);
    \draw [fill=gray!40,ultra thick] (-1,-1) rectangle (1,1);
    \draw [fill=white, ultra thick] (0., 0.) circle [radius=0.5];
  \end{tikzpicture}}
  \caption{Problem domain $\hat\Omega\backslash\Omega$}
\end{center}
\end{subfigure}\hfill\begin{subfigure}{0.3\textwidth}
\begin{center}
\resizebox{0.8\textwidth}{!}{
  \begin{tikzpicture}
      \draw[->, ultra thick, \extArrowColor] (0,0) -- (2, 0);
    \draw[->, ultra thick, \extArrowColor] (0,0) -- (2*0.866, 1);
    \draw[->, ultra thick, \extArrowColor] (0,0) -- (1, 2*0.866);
    \draw[->, ultra thick, \extArrowColor] (0,0) -- (0, 2);
    \draw[->, ultra thick, \extArrowColor] (0,0) -- (-2*0.866, 1);
    \draw[->, ultra thick, \extArrowColor] (0,0) -- (-1, 2*0.866);
    \draw[->, ultra thick, \extArrowColor] (0,0) -- (-2, 0);
    \draw[->, ultra thick, \extArrowColor] (0,0) -- (-2*0.866, -1);
    \draw[->, ultra thick, \extArrowColor] (0,0) -- (-1, -2*0.866);
    \draw[->, ultra thick, \extArrowColor] (0,0) -- (0, -2);
    \draw[->, ultra thick, \extArrowColor] (0,0) -- (2*0.866, -1);
    \draw[->, ultra thick, \extArrowColor] (0,0) -- (1, -2*0.866);
     \filldraw[white, even odd rule,inner color=\extArrowColor,outer color=white] (0, 0) circle [radius=1.7];
    \draw[fill=white, ultra thick] (0., 0.) circle [radius=0.5];
    \node at (0,0) {$\Omega$};
    \node at (0.8, -0.22) {$\partial\Omega$};
  \end{tikzpicture}}
  \caption{IE domain $\mathbb{R}^d\backslash\Omega$}
\end{center}
\end{subfigure}\hfill\begin{subfigure}{0.3\textwidth}
\begin{center}
\resizebox{0.8\textwidth}{!}{
  \begin{tikzpicture}
      \path [fill=white] (-2,-2) rectangle (2,2);
    \draw [fill=\extHatColor!60,ultra thick](-1,-1) rectangle (1,1);
    \node at (1.3, -0.7) {$\partial\hat\Omega$};
    \node at (0, 0) {$\hat\Omega$};
  \end{tikzpicture}}
  \caption{FE domain $\hat\Omega$}
\end{center}
\end{subfigure}\caption{Examples of an actual problem domain and the two computational domains for the
coupled FE-IE problem of Section~\ref{sec:int-ext} on a domain with a circular hole.}
\label{fig:int-ext-domain}
\end{figure}
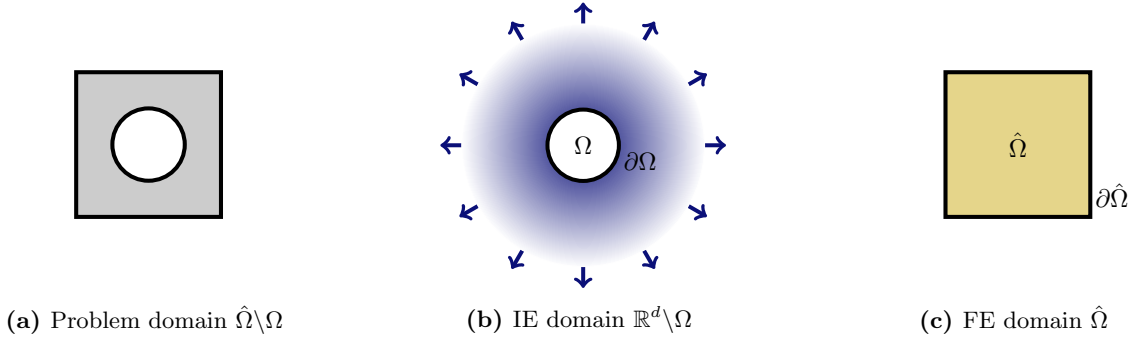
In this way, we allow both methods to play to their individual strengths: the finite element solution exists on a regular, bounded mesh with no
exclusions, while the layer potential solution handles boundary
conditions present on the
boundary of an exclusion, $\partial \Omega$, that is potentially geometrically complex.
\subsection{FE-IE decomposition}\label{sec:FE-IE-decomp}We solve~\eqref{eq:int-ext-bvp} as before by representing
\[
  u(x)=u_1(x)+u_2(x)\quad(x\in\hat\Omega \setminus\Omega)
\]
and posing a system of BVPs for $u_1$ and $u_2$:
\begin{align}
  \label{eq:int-ext}
  \text{[FE]} \quad -\triangle u_1(x) & = f(x), & \quad x \in \hat\Omega, \\
  u_1(x) & = \hat g(x) - u_2(x), &  x \in \partial\hat\Omega, \nonumber\\
  \text{[IE]} \quad -\triangle u_2(x) & = 0,  &x \in \mathbb{R}^d\backslash\Omega,\nonumber\\
  u_2(x) & = g(x) - u_1(x), & x \in \partial\Omega,\nonumber\\
  u_2(x) - A\log|x| & = o(1) & x\to\infty\nonumber \quad(d=2),\\
  u_2(x) & = o(1) & x\to\infty\nonumber \quad(d=3),
\end{align}
with a given constant $A$. In two dimensions,~\eqref{eq:int-ext} includes a far-field boundary
condition for $u_2$ that differs from the standard far-field BC for the
exterior Dirichlet problem, $u(x)=O(1)$ as
$x\to\infty$~\cite{Kress:2014}. There are two reasons for this modification.
First, the BVP~\eqref{eq:int-ext-bvp} allows solutions
containing a logarithmic singularity within $\Omega$. Without
permitting logarithmic blow-up at infinity, such solutions would be
ruled out by the splitting~\eqref{eq:int-ext}.
Neither $u_1$ (nonsingular throughout $\Omega$) nor $u_2$ would be able
to represent them. Second, allowing nonzero additive constants in $u_2$
would lead to a non-uniqueness, since for any given constant $C$,
$(u_1^\text{new}, u_2^\text{new})=(u_1+C,u_2-C)$ would likewise be an
admissible solution.

Next, we support the existence of a solution of the coupled BVPs~\eqref{eq:int-ext} with
the stricter-than-conventional decay condition $o(1)$, we let $A=0$ without loss of generality.
We remind the reader that any solution of the exterior Dirichlet
problem in two dimensions may be represented as $\DblOp\dens +
C$, for some constant $C$~\cite[Thm.~6.25]{Kress:2014}.  Since
$(\DblOp\dens)(x)=O(|x|^{-1})$ ($x\to\infty$)~\cite[(6.19)]{Kress:2014}, the only
loss from our more restrictive decay condition is a constant,
which, as discussed above, may be contributed by $u_1$.

From~\eqref{eq:int-ext}, we see that the two subproblems are now
fully coupled. We cast the subproblem for $u_1$ in variational form in anticipation of
FEM discretization and the subproblem for $u_2$ in terms of layer
potentials. To arrive at the coupled system, we first define
the operator $\Rop$ as the restriction to the boundary $\partial\Omega$ and $\RopHat$ as the restriction to the boundary $\partial\hat\Omega$.

We write $u_2$ in terms of an unknown density $\dens$ using
a layer potential operator $\IErep$ such that $u_2 = \IErep\dens$,
while ensuring that the resulting integral equation is of the second kind:
\begin{equation}
\genIEop \dens = g - \Rop u_1,
\end{equation}
for some constant $C$.

Next, we decompose $u_1$ as
\begin{equation}
 u_1 = \tilde{u}_1 + \hat u_1 =\tilde u_1 +\lift \hat r_1,
\end{equation}
where $\tilde u_1 \in H^1_0(\hat\Omega)$ is zero on the boundary $\partial\hat\Omega$ and
$\hat u_1 \in H^1(\hat\Omega)$ is used to enforce the boundary conditions. $\hat u_1$ is defined by a lifting operator $\lift: H^{1/2}(\partial\hat\Omega) \rightarrow H^1(\hat\Omega)$ that selects a specific $\hat u_1$ in the volume from its boundary restriction $\hat r_1 \equiv \RopHat\hat{u}_1$.
(The precise choice of the lifting operator within these guidelines
has no influence on the obtained solution $\hat u_1$.)

The coupled problem is then to find  $\dens \in C(\partial\Omega)$, $\tilde u_1 \in H^1_0(\hat\Omega)$, and $\hat r_1 \in H^{1/2}(\partial\hat\Omega)$  such that
\begin{equation}
  \begin{roomymat}{ccc}
    \ CI + \bar\IErep    &  \Rop         & \Rop\lift          \\
        0                        & \FEop(v)  &  \FEop(v) \lift  \\
    \RopHat\IErep &    0  &I
  \end{roomymat}
  \begin{roomymat}{c}
    \dens \\
    \tilde u_1 \\
    \hat r_1
  \end{roomymat}
  =
  \begin{roomymat}{c}
    g \\
    \mathcal{M}(v)\, f \\
    \hat g
  \end{roomymat}
  \qquad \forall v \in H^1_0(\hat\Omega),
  \label{eq:int-ext-coupled}
\end{equation}
where $\IErep \dens$ for $u_2$ is used in~\eqref{eq:int-ext}.

Next, we isolate the density equation in~\eqref{eq:int-ext-coupled} using a Schur complement, which results in
\begin{equation}\label{eq:densitywithFE}
  \left\{C I + \bar\IErep -
  \Rop \FEsolve
  \begin{roomymat}{c}
    0\\
    \RopHat \IErep
  \end{roomymat}\right\} \dens
  =
  g -
  \Rop \FEsolve
  \begin{roomymat}{c}
    f \\
    \hat g
  \end{roomymat},
\end{equation}
with the \emph{solution operator} $\FEsolve  : L^2(\hat\Omega) \times
H^{1/2}(\partial\hat\Omega) \rightarrow H^1(\hat\Omega)$, where
$\applyFEsolve{\zeta}{ \hat \rho}$ is defined
as the function $\mu = \tilde{\mu}+\lift \hat\rho$, and where $\tilde{\mu}\in H^1_0(\hat \Omega)$ satisfies
\begin{equation}
  \FEop(v) (\tilde\mu + \lift \hat \rho) = \mathcal M (v) \zeta,
  \quad v \in H^1_0(\hat \Omega).
  \label{eq:fe-solve-op-variational}
\end{equation}
This allows us to express the IE solution to the coupled
problem~\eqref{eq:int-ext-coupled} in terms of input data $f$, $g$,
and $\hat g$, along with the action of $\FEsolve$.   Once $\dens$ is
known, the FE solution is found as
$u_1 = \applyFEsolve{ f }{\hat g - \RopHat\IErep \gamma}$.

The form in~\eqref{eq:densitywithFE} identifies two remaining issues.  The first is the choice of $C$, which
is determined by selecting a layer potential
representation $\IErep$ of $u_2$.  The conventional choice, a double layer potential, is not suitable because the exterior limit of the double
layer operator $\DblOp$ has a nullspace spanned by the
constants.  A common way of addressing this issue involves adding a
layer potential with a lower-order singularity~\citep{Kress:2014};
however, this is inadequate for our coupled FE-IE system (for
$d=2$), as we explain below.  Instead, we choose $\IErep = \DblOp + \SnglOp$, the
sum of the double and single layer potentials, each with the same
density. This choice, by the exterior jump relations for the single
and double layer potentials \citep{Kress:2014} establishes $C=1/2$.

Second, uniqueness and existence for~\eqref{eq:int-ext-coupled} hinges
on joint compactness of the composition of operators $\Rop\applyFEsolve{0}{\RopHat\IErep}$
using the next lemma.
\begin{lemma}  \label{lem:compactcoupling}
  Let $\hat\Omega\subseteq \mathbb R^n$ ($n=2,3$) be bounded, satisfy
  an exterior sphere condition at every boundary point,
  and contain a domain $\Omega$ with a $C^\infty$ boundary. Further assume that
  $d(\partial\hat\Omega, \partial \Omega)> 0$.
  Then the operator
  $\Rop\applyFEsolve{0}{\RopHat\IErep}: C(\partial\Omega) \rightarrow
  C(\partial\Omega)$ is compact.
\end{lemma}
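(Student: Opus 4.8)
The plan is to prove the stronger statement that $\Rop\applyFEsolve{0}{\RopHat\IErep}$ maps $C(\partial\Omega)$ \emph{boundedly} into $C^1(\partial\Omega)$; compactness on $C(\partial\Omega)$ then follows at once from the compact embedding $C^1(\partial\Omega)\hookrightarrow C(\partial\Omega)$, which holds by Arzel\`a--Ascoli since $\partial\Omega$ is a compact $C^\infty$ manifold. To exhibit this smoothing I would follow the definition of the operator and factor it as three maps. Setting $\zeta=0$ in~\eqref{eq:fe-solve-op-variational} forces $\int_{\hat\Omega}\nabla\mu\cdot\nabla v=0$ for all $v\in H^1_0(\hat\Omega)$, so $\mu=\applyFEsolve{0}{\hat\rho}$ is the harmonic function in $\hat\Omega$ with $\RopHat\mu=\hat\rho$. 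Thus the operator is the composition of (i)~$\dens\mapsto\hat\rho:=\RopHat\IErep\dens$, the restriction to $\partial\hat\Omega$ of the layer potential $\IErep\dens=(\DblOp+\SnglOp)\dens$; (ii)~the harmonic extension $\hat\rho\mapsto\mu$ into $\hat\Omega$; and (iii)~the restriction $\Rop\mu=\mu\rvert_{\partial\Omega}$.

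For step~(i), the separation hypothesis $d(\partial\hat\Omega,\partial\Omega)>0$ is decisive. For a target point $x\in\partial\hat\Omega$ the kernels of $\SnglOp$ and $\DblOp$ are evaluated against sources $y\in\partial\Omega$ at a fixed positive distance, so they and all of their $x$-derivatives are smooth and uniformly bounded. Hence $\dens\mapsto\RopHat\IErep\dens$ carries $C(\partial\Omega)$ boundedly into $C^k(\partial\hat\Omega)$ for every $k$; in particular $\hat\rho$ is continuous and $\infnorm{\partial\hat\Omega}{\hat\rho}\le C\,\infnormbd{\dens}$.

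For steps~(ii) and~(iii), the exterior sphere condition guarantees that every point of $\partial\hat\Omega$ is regular for the Dirichlet problem, so the harmonic extension of the continuous datum $\hat\rho$ is continuous up to the boundary and the maximum principle yields $\infnorm{\hat\Omega}{\mu}\le\infnorm{\partial\hat\Omega}{\hat\rho}$. Because $\partial\Omega$ sits in the interior of $\hat\Omega$ at distance at least $d_0:=d(\partial\hat\Omega,\partial\Omega)>0$, I would apply the interior Cauchy (derivative) estimates for harmonic functions on a compact tubular neighborhood $K$ of $\partial\Omega$ that stays a distance $\ge d_0/2$ from $\partial\hat\Omega$, obtaining $\|\mu\|_{C^1(\partial\Omega)}\le\|\mu\|_{C^1(K)}\le C(d_0)\,\infnorm{\hat\Omega}{\mu}$. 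Chaining the three estimates gives $\|\Rop\applyFEsolve{0}{\RopHat\IErep}\dens\|_{C^1(\partial\Omega)}\le C\,\infnormbd{\dens}$, the desired boundedness into $C^1(\partial\Omega)$.

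The main obstacle is that $\hat\Omega$ is assumed only to satisfy an exterior sphere condition, not to be smooth, so $\mu$ need not be differentiable up to $\partial\hat\Omega$ and one cannot simply differentiate a global elliptic-regularity estimate. The crux of the argument is that all regularity is harvested in the interior: the separation $d(\partial\hat\Omega,\partial\Omega)>0$ decouples the merely continuous behaviour on $\partial\hat\Omega$ from the automatically analytic behaviour of the harmonic $\mu$ near $\partial\Omega$, so that only the maximum principle and interior harmonic estimates---neither needing boundary smoothness---are invoked. The exterior sphere condition enters solely to legitimize the solution operator on continuous data together with the maximum principle. One should also dispatch the routine point that the variational solution of~\eqref{eq:fe-solve-op-variational} agrees with this classical harmonic extension, which holds because the smooth datum $\hat\rho$ lies in $H^{1/2}(\partial\hat\Omega)$ and, by Weyl's lemma, the weak solution is smooth (indeed analytic) in the interior where the estimates are taken.
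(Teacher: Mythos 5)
Your proposal is correct, and it reaches compactness by a genuinely different mechanism than the paper, even though the skeleton is the same. Both arguments use the identical factorization (layer potential restricted to $\partial\hat\Omega$, then harmonic extension into $\hat\Omega$, then restriction to $\partial\Omega$), the exterior sphere condition to obtain a classical solution continuous up to $\partial\hat\Omega$, the maximum principle for the sup bound, and the identification of the variational solution $\applyFEsolve{0}{\RopHat\IErep\dens}$ with that classical solution (the paper cites uniqueness of $H^1$ solutions, \cite[Cor.~8.2]{gilbarg_elliptic_2015}, which is the cleaner citation than Weyl's lemma alone, since one must also note the classical solution lies in $H^1$). The difference is where compactness is harvested: the paper declares $\RopHat\IErep$ itself compact (implicitly, because the separation $d(\partial\hat\Omega,\partial\Omega)>0$ makes its kernel smooth) and then composes with the merely bounded map $\Rop\FEsolve$, whereas you treat every factor as merely bounded and extract compactness at the very end from interior Cauchy estimates for harmonic functions on a tubular neighborhood of $\partial\Omega$, giving boundedness of the whole composition into $C^1(\partial\Omega)$ and then Arzel\`a--Ascoli. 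Your route yields a slightly stronger conclusion (a smoothing estimate) and fills in a step the paper leaves terse; its cost is that it leans on harmonicity of the extension in the interior, so it does \emph{not} transfer to the discrete analogue (Theorem~\ref{thm:compactcoupling-discrete}), where $\applyFEsolveh{0}{\cdot}$ produces a piecewise polynomial, not a harmonic function, and interior derivative estimates are unavailable; there the paper's placement of compactness on the $\RopHat\IEreph$ factor, combined with only the discrete maximum principle, is what makes the collective-compactness argument for Anselone's theorem go through. Two cosmetic points: your claim that step~(i) maps boundedly into $C^k(\partial\hat\Omega)$ is ill-posed when $\partial\hat\Omega$ has corners (the intended $\hat\Omega$ is a square, consistent with the exterior sphere hypothesis), but you only use continuity and the sup bound, so nothing is lost; and the $C^1(\partial\Omega)$ norm of the restriction should be taken with respect to tangential derivatives, which your interior gradient bound on the tubular neighborhood indeed controls since $\partial\Omega$ is a compact $C^\infty$ curve.
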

\begin{proof}
  First consider the operator $\RopHat\IErep$.
  Let $\dens\in C(\partial\Omega)$.  $\RopHat\IErep\dens$ evaluates
  the layer potential on the outer boundary $\partial\hat\Omega$.
  Since $x\mapsto \IErep\dens(x)$ is harmonic, $\IErep\dens(x)$ is
  analytic for $x\not\in \partial \Omega$~\cite[Thm.~6.6]{Kress:2014},
  and the restriction to the boundary $\partial\hat \Omega$,
  $\RopHat\IErep\dens$, is at least continuous.

  Next, consider the composite operator $\dens \mapsto
  \applyFEsolve{0}{\RopHat\IErep\dens}$. The boundary
  value problem
  \begin{align}
    \label{eq:coupling-classical}
    \quad -\triangle w(x) & = 0, & \quad x \in \hat\Omega, \\
    w(x) & = \RopHat\IErep\dens(x), &  x \in \partial\hat\Omega
    \nonumber
  \end{align}
  has a
  classical solution $w\in C^0(\overline{ \hat \Omega })\cap
  C^2(\hat\Omega)$ \cite[Thm.~6.13]{gilbarg_elliptic_2015}
  because of the regularity of the domain and data.
  More precisely, even $w\in C^\infty(\hat \Omega)$ by \cite[Thm.~6.17]{gilbarg_elliptic_2015}.

  The classical solution $w$ found above is identical to the unique
  (\cite[Cor.~8.2]{gilbarg_elliptic_2015})
  variational solution $\applyFEsolve{ 0 }{\RopHat\IErep \gamma}\in
  H^1(\hat\Omega)$.
  The classical maximum principle (e.g.~\cite[Thm.~3.1]{gilbarg_elliptic_2015}) then yields
  that
  \[
    \|\Rop\applyFEsolve{0}{\RopHat\IErep\dens}\|_\infty \le
    \|\RopHat\IErep\dens\|_\infty.
  \]
  Consequently, we have that
  $\Rop\FEsolve$ is bounded and $\RopHat\IErep$ is compact.
   The composition of a compact operator with a bounded operator is compact, which completes the proof.
\end{proof}

Using slightly different machinery, a discrete version of
Lemma~\ref{lem:compactcoupling} is available at least in $\mathbb R^2$.
To this end, let $\hat \Omega\subset \mathbb R^2$ be convex
and polygonal and define a finite
element subspace $V^h\subset H^1(\hat \Omega)$ of continuous
polynomials of degree $\ge 1$ on a quasi-uniform
triangulation of $\hat \Omega$ (in the sense of~\cite{Schatz:1980}).
Also define $V^h_0:=H^1_0(\hat \Omega)\cap V^h$.
Further define the \emph{discrete lifting operator} $\lifth:H^{1/2}(\partial
\hat \Omega)\to V^h$ and the
\emph{discrete solution operator}
$
  \FEsolveh: V^h \times H^{1/2}(\partial\hat\Omega) \to V^h
$
where $\applyFEsolveh{\zeta}{ \hat\rho}$ is defined
as the function $\tilde\mu^h+\lifth \hat\rho$, where $\tilde\mu^h\in V^h_0$ satisfies
\[
  \FEop(v^h) (\tilde\mu^h + \lifth \hat\rho) = \mathcal M (v^h) \zeta,
  \quad v^h \in V^h_0.
\]
(Again, the precise choice of the discrete lifting operator within these
guidelines has no influence on the obtained solution.)

\begin{theorem}  \label{thm:compactcoupling-discrete}
  Assume that $\hat\Omega\subset \mathbb R^2$ is bounded, convex, and
  polygonal and contains a domain $\Omega$ with a $C^\infty$ boundary. Further assume that
  $d(\partial\hat\Omega, \partial \Omega)> \epsilon$ for some finite
  $\epsilon>0$. Let the family of operators
  \[
    {\{\RopHat\IEreph:C(\partial\Omega) \rightarrow C(\partial\hat\Omega)\}}_h
  \]
  be collectively compact and the functions in their ranges be harmonic.
  Then the family of operators
  \[
    {\{\dens\mapsto \Rop\applyFEsolveh{0}{\RopHat\IEreph \dens }: C(\partial\Omega) \rightarrow
    C(\partial\Omega)\}}_h
  \]
  is collectively compact for sufficiently small $h$.
\end{theorem}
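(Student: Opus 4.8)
The plan is to follow the architecture of the proof of Lemma~\ref{lem:compactcoupling}, but to replace the two continuous ingredients used there---the interior analyticity of the harmonic extension and the maximum principle---by their finite element counterparts: interior (local) maximum-norm error estimates and global maximum-norm stability of the Galerkin Dirichlet solve on a convex polygon. Recall that the asserted collective compactness is, by definition, the statement that the set
\[
  S=\bigl\{\Rop\applyFEsolveh{0}{\RopHat\IEreph\dens}\ :\ \infnormbd{\dens}\le 1,\ h\le h_0\bigr\}
\]
is relatively compact in $C(\partial\Omega)$ for some fixed threshold $h_0>0$. I would therefore verify the two Arzel\`a--Ascoli hypotheses---uniform boundedness and equicontinuity of $S$---directly. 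Throughout, write $\phi_{h,\dens}=\RopHat\IEreph\dens\in C(\partial\hat\Omega)$ for the discrete boundary data and $w_{h,\dens}=\applyFEsolveh{0}{\phi_{h,\dens}}\in V^h$ for the associated discrete harmonic extension, so that the generic element of $S$ is the trace $\Rop w_{h,\dens}$.

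First I would extract a uniform bound on the data: since $\{\RopHat\IEreph\}_h$ is collectively compact it is in particular uniformly bounded, hence $M:=\sup\{\infnorm{\partial\hat\Omega}{\phi_{h,\dens}}:\infnormbd{\dens}\le 1,\ h\}<\infty$. Maximum-norm stability of the discrete Dirichlet problem on a bounded convex polygon with a quasi-uniform triangulation~\cite{Scott:1976,Schatz:1980} then gives $\infnorm{\hat\Omega}{w_{h,\dens}}\le C_s\,\infnorm{\partial\hat\Omega}{\phi_{h,\dens}}\le C_s M$ with $C_s$ independent of $h$. Restricting to $\partial\Omega$ yields $\infnormbd{\Rop w_{h,\dens}}\le C_s M$, so $S$ is uniformly bounded.

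The substantive step is equicontinuity, and this is where the interior estimates enter. Fix open sets $\Omega'\subset\subset\Omega''\subset\subset\hat\Omega$ with $\partial\Omega\subset\Omega'$ and $\mathrm{dist}(\Omega'',\partial\hat\Omega)\ge\epsilon/2$; this is possible precisely because $d(\partial\hat\Omega,\partial\Omega)>\epsilon$. Let $\tilde w_{h,\dens}$ be the exact harmonic extension of $\phi_{h,\dens}$ into $\hat\Omega$. Classical interior estimates for harmonic functions, together with the maximum principle $\sup_{\hat\Omega}|\tilde w_{h,\dens}|=\infnorm{\partial\hat\Omega}{\phi_{h,\dens}}\le M$, bound every derivative of $\tilde w_{h,\dens}$ on $\Omega''$ in terms of $M$ and $\mathrm{dist}(\Omega'',\partial\hat\Omega)$; in particular $\|\tilde w_{h,\dens}\|_{W^{2,\infty}(\Omega'')}\le C M$. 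The error $\tilde w_{h,\dens}-w_{h,\dens}$ is $\FEop$-orthogonal to $V^h_0$, i.e.\ it is the Galerkin error for this Dirichlet problem, so a local (interior) maximum-norm finite element error estimate applies and bounds $\|\tilde w_{h,\dens}-w_{h,\dens}\|_{W^{1,\infty}(\Omega')}$ by the local best-approximation error over $\Omega''$---controlled by $\|\tilde w_{h,\dens}\|_{W^{2,\infty}(\Omega'')}\le CM$---plus a global weak (e.g.\ $L^2$) norm of the error, which is in turn controlled by the stability bound of the previous paragraph. Hence $\|w_{h,\dens}\|_{W^{1,\infty}(\Omega')}\le CM$ uniformly in $h\le h_0$ and in $\dens$. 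Since $\partial\Omega$ is a smooth curve contained in $\Omega'$, the traces $\Rop w_{h,\dens}$ are then Lipschitz along $\partial\Omega$ with a common constant, so $S$ is equicontinuous, and Arzel\`a--Ascoli gives the claim.

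I expect the main obstacle to be the interior estimate, specifically arranging that its constants---and the constant in the global stability bound that controls the pollution (weak-norm) term---are genuinely independent of $h$ and of the particular datum $\phi_{h,\dens}$; convexity and quasi-uniformity are exactly the hypotheses that make these uniform maximum-norm results available on a polygon, while the $\epsilon$-separation is what keeps $\partial\Omega$ inside the interior region where both harmonic functions and their discrete approximations are smooth. I note that only the uniform boundedness of $\{\RopHat\IEreph\}_h$, rather than its full collective compactness, is used above, and that the hypothesis that the ranges are harmonic is what feeds the interior smoothness of $\tilde w_{h,\dens}$ exploited by the local estimate. A slightly different packaging would write $\Rop w_{h,\dens}=G\phi_{h,\dens}+\Rop(w_{h,\dens}-\tilde w_{h,\dens})$, with $G$ the compact continuous harmonic-extension-and-restriction operator from Lemma~\ref{lem:compactcoupling}, handling the first term by relative compactness of $\{\phi_{h,\dens}\}$ and the second by the interior error bound; this route uses the same machinery and the full collective-compactness hypothesis.
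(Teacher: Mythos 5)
Your proposal is correct, but it takes a genuinely different route from the paper's own proof. The paper argues by operator composition, exactly as in Lemma~\ref{lem:compactcoupling}: it invokes the discrete (weak) maximum principle of~\cite{Schatz:1980} to obtain the $h$-independent stability bound $\|\Rop\applyFEsolveh{0}{\RopHat\IEreph\dens}\|_\infty \le C\|\RopHat\IEreph\dens\|_\infty$, and then concludes by composing the collectively compact family $\{\RopHat\IEreph\}_h$ with the uniformly bounded family $\{\Rop\FEsolveh\}_h$ (``compact composed with bounded is compact''). You instead verify the Arzel\`a--Ascoli criteria for the image set directly: the same discrete maximum principle gives uniform boundedness, while equicontinuity comes from comparing $\applyFEsolveh{0}{\RopHat\IEreph\dens}$ with the exact harmonic extension of the same boundary data and invoking Schatz--Wahlbin interior maximum-norm estimates (the pollution term being absorbed crudely via the stability bound), with the $\epsilon$-separation of $\partial\Omega$ from $\partial\hat\Omega$ guaranteeing the interior region where this applies. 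Your route is technically heavier, but it buys two real things. First, it needs only uniform boundedness of $\{\RopHat\IEreph\}_h$ rather than full collective compactness, as you note. Second, and more importantly, it is airtight at precisely the point where the paper's argument is terse: for \emph{families} of operators, applying a merely uniformly bounded family after a collectively compact one does not in general yield a collectively compact family (take $A_h$ all equal to a fixed rank-one projection and $B_h$ isometries carrying the range vector to an orthonormal sequence; then $\{B_h A_h\}$ maps the unit ball onto a set containing that sequence). Something extra---pointwise convergence of the outer family, or equicontinuity of the images---is needed, and your interior-estimate argument supplies exactly this ingredient; in that sense your proof can be read as a completion of the paper's, not merely an alternative to it. One small correction: your side remark that harmonicity of the ranges ``feeds the interior smoothness of $\tilde w_{h,\dens}$'' is slightly off, since the harmonic extension of continuous data is automatically $C^\infty$ inside $\hat\Omega$; what that hypothesis really buys is that the data $\RopHat\IEreph\dens$ is smooth enough (the restriction of a function analytic near $\partial\hat\Omega$) to lie in $H^{1/2}(\partial\hat\Omega)$, so that the variational solution exists and the Galerkin orthogonality underlying both the stability bound and the interior estimate is meaningful; note also that the mismatch between the discrete lifting $\lifth$ and the exact boundary data is harmless there, since orthogonality is only tested against $V^h_0$.
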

\begin{proof}
  First consider the operator $\RopHat\IEreph$.
  Let $\dens\in C(\partial\Omega)$.
  $\RopHat\IEreph\dens$ evaluates
  the layer potential on the outer boundary $\partial\hat\Omega$.
  Since $x\mapsto \IEreph\dens(x)$ is harmonic, it is also
  analytic for $x\not\in \partial \Omega$~\cite[Thm.~6.6]{Kress:2014},
  and so is its restriction $\RopHat\IEreph\dens$ to the
  boundary $\partial\hat \Omega$ is at least continuous.

  The discrete maximum principle~\cite{Schatz:1980}
  yields that
  \begin{equation}
    \|\Rop\applyFEsolveh{0}{\RopHat\IEreph\dens}\|_\infty \le
    C\|\RopHat\IEreph\dens\|_\infty.
    \label{eq:discrete-max-principle}
  \end{equation}
  where $C$ is independent of $h$.
  Noting $\Rop\applyFEsolveh{0}{\RopHat\IEreph\dens}\in V_h\subset C(\hat\Omega)$
  by construction, we have that
  $\Rop\applyFEsolveh{0}{\RopHat\IEreph} : C(\partial\Omega) \rightarrow
  C(\partial\Omega)$, with bounded $\Rop\FEsolveh$ and
  compact $\RopHat\IEreph$.
  We obtain our claim since the composition of a compact operator with a bounded operator is compact,
  noting that collective compactness follows from the
  $h$-independence of the constant in~\eqref{eq:discrete-max-principle}.
\end{proof}

The form of the operator in~\eqref{eq:densitywithFE} is
\begin{equation}\label{eq:Z}
\mathcal Z = CI + \IErepBdry - \Rop\applyFEsolve{0}{\RopHat\IErep}.
\end{equation}
Thus,
Lemma~\ref{lem:compactcoupling}
and Theorem~\ref{thm:compactcoupling-discrete} establish that the integral
equation~\eqref{eq:densitywithFE} is of the second kind and its discretization takes a form to which Anselone's Theorem applies. The operator $\mathcal Z$ in~\eqref{eq:Z} and its discrete version are the sum of an identity and a
compact operator.
Consequently, by the Fredholm alternative, if the
operator has no nullspace, then existence of the solution is
guaranteed. Again, convergence of the solution as $h\to 0$ is assured by Anselone's theorem.

We highlight some factors that influenced our choice of the IE representation.
The purely IE part of the operator,
$CI + \IErepBdry$, represents the behavior on
$\partial\Omega$ of a harmonic function exterior to $\partial \Omega$,
while the coupled FE part,
$\Rop\applyFEsolve{0}\RopHat\IErep$, is approximating a harmonic function
interior to $\partial \hat\Omega$; both functions have the same value on the
boundary $\partial\hat\Omega$ (but not on $\partial\Omega$).
A nontrivial nullspace exists in~\eqref{eq:densitywithFE} if
the intersection of the ranges of the operators is nontrivial.
Distinct decay behavior
of interior and exterior Dirichlet solutions generally keeps the
ranges of these operators from having a nontrivial intersection.

{}

\subsubsection{Remarks on the Behavior of the Error}
The observed convergence behavior is similar to that of the interior case,
but with additional components stemming from the FE error and the IE representation error in
the operator $\Rop\applyFEsolve{0}{\RopHat\IErep}$.   As it is a composition of operators
with known high-order accuracy, however, the composite scheme has the
same asymptotic error behavior as the less accurate of its
constituent parts, analogous to Lemma~\ref{lem:int-decoupling}.
In particular, the error in the $\applyFEsolve{0}{\RopHat\IErep}$ part of the overall
operator on $\dens$ is bounded by the error in its boundary
conditions---the operator error on $\RopHat\IErep$---by the weak
discrete maximum principle.  Thus the leading term effect of the
composition of the two is the same as the other FE or IE operator
error terms, depending on which error is larger.

The error behavior of the finite element solution $u_1$ once again follows standard finite element convergence theory,
with additional error incurred through error in $\RopHat\IErep\dens$ in the boundary condition.  However,
again the additional FE error is bounded by the error in $\RopHat\IErep\dens$ through the discrete weak maximum principle.  The net result
is that we expect to retain the same overall order of convergence as in the interior case and with similar dependence on the FE and IE solvers.
\subsection{Numerical experiments}
We consider the coupled system~\eqref{eq:int-ext-coupled} with the exact solution
\[
  u = \log(r_0) + 2\sin(\pi x)\sin(\pi y),
\]
where
\[
  r_0 = \sqrt{(x - 0.1)^2 + (y + 0.02)^2}.
\]
In each numerical example, GMRES is used to solve the linear system with algebraic multigrid preconditioning in the case of the FE operators.

The IE, FE, and coupled solutions are shown for a starfish exclusion in
Figure~\ref{fig:starfish-ext} and where the parametrization is given by
\begin{equation}
   \gamma(t) =
 \begin{bmatrix}
   1/2 + (1/8)\sin{(10\pi t)}\cos{(2\pi t)},\\
   1/2 + (1/8)\sin{(10\pi t)}\sin{(2\pi t)},
  \end{bmatrix}
  \quad t \in [0, 1].
  \label{eq:starfish}
\end{equation}
Figure~\ref{fig:int-ext-solution} gives a visual impression of the
obtained solution.
\ifpaper{Convergence results are found in Table~\ref{tab:int-ext-conv-starfish}.}

As expected, we observe high-order convergence.
\begin{figure}[!ht]
\begin{center}
\begin{subfigure}{0.33\textwidth}
\begin{center}
  \includegraphics[width=0.97\textwidth]{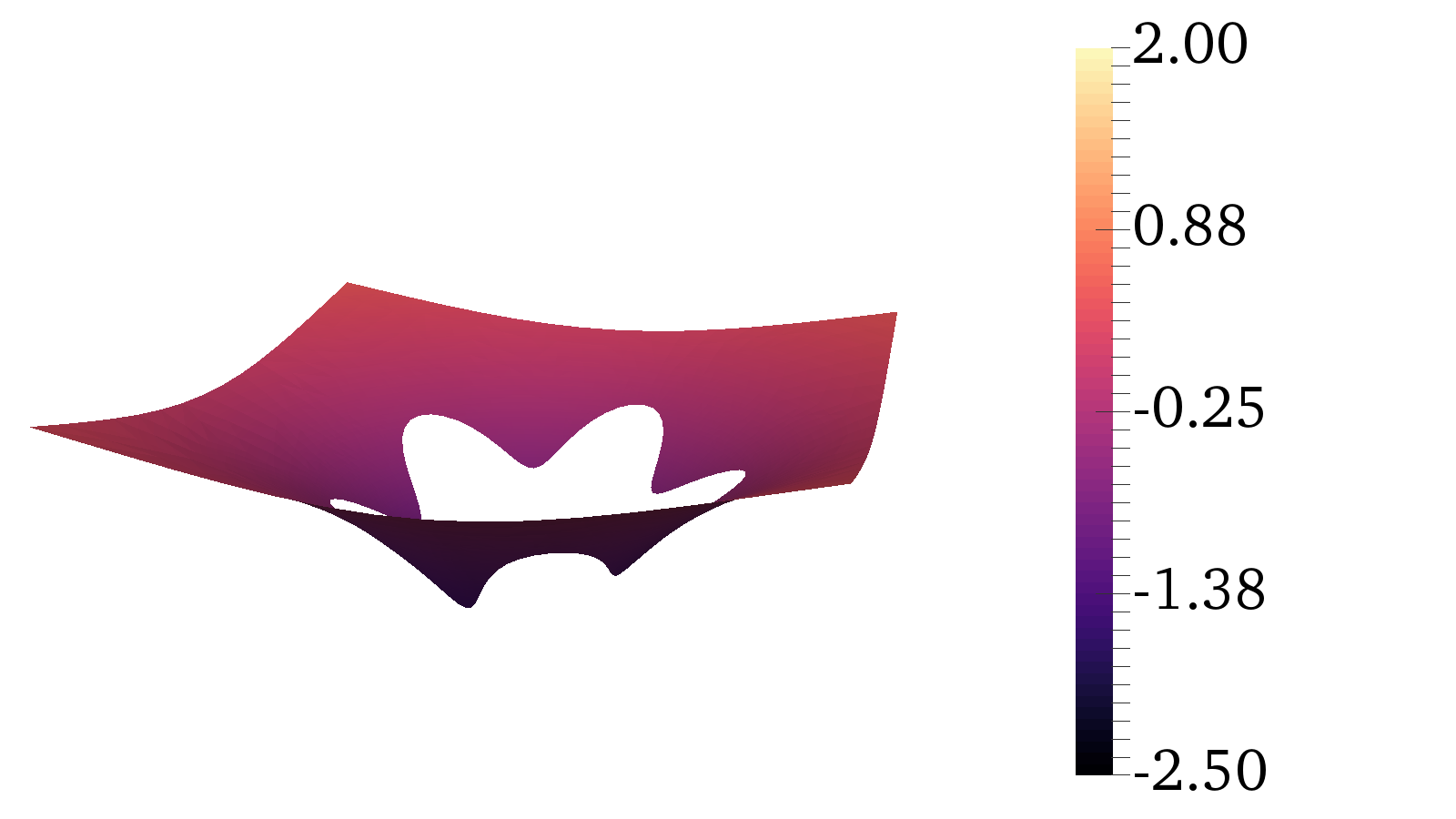}
  \caption{IE solution.}
\end{center}
\end{subfigure}\hfill\begin{subfigure}{0.33\textwidth}
\begin{center}
  \includegraphics[width=0.97\textwidth]{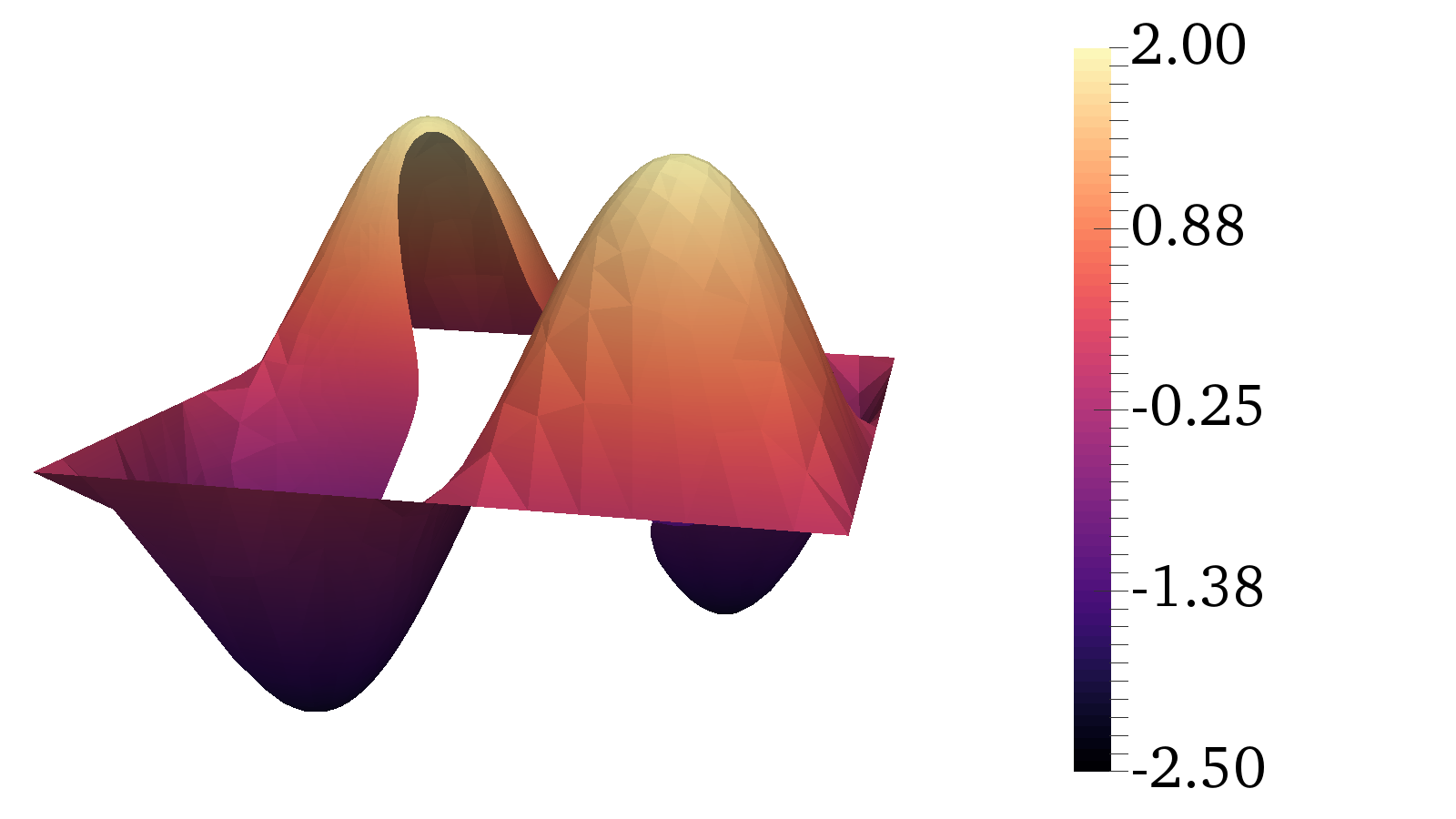}
  \caption{FE solution.}
\end{center}
\end{subfigure}
\begin{subfigure}{0.33\textwidth}
\begin{center}
  \includegraphics[width=0.99\textwidth]{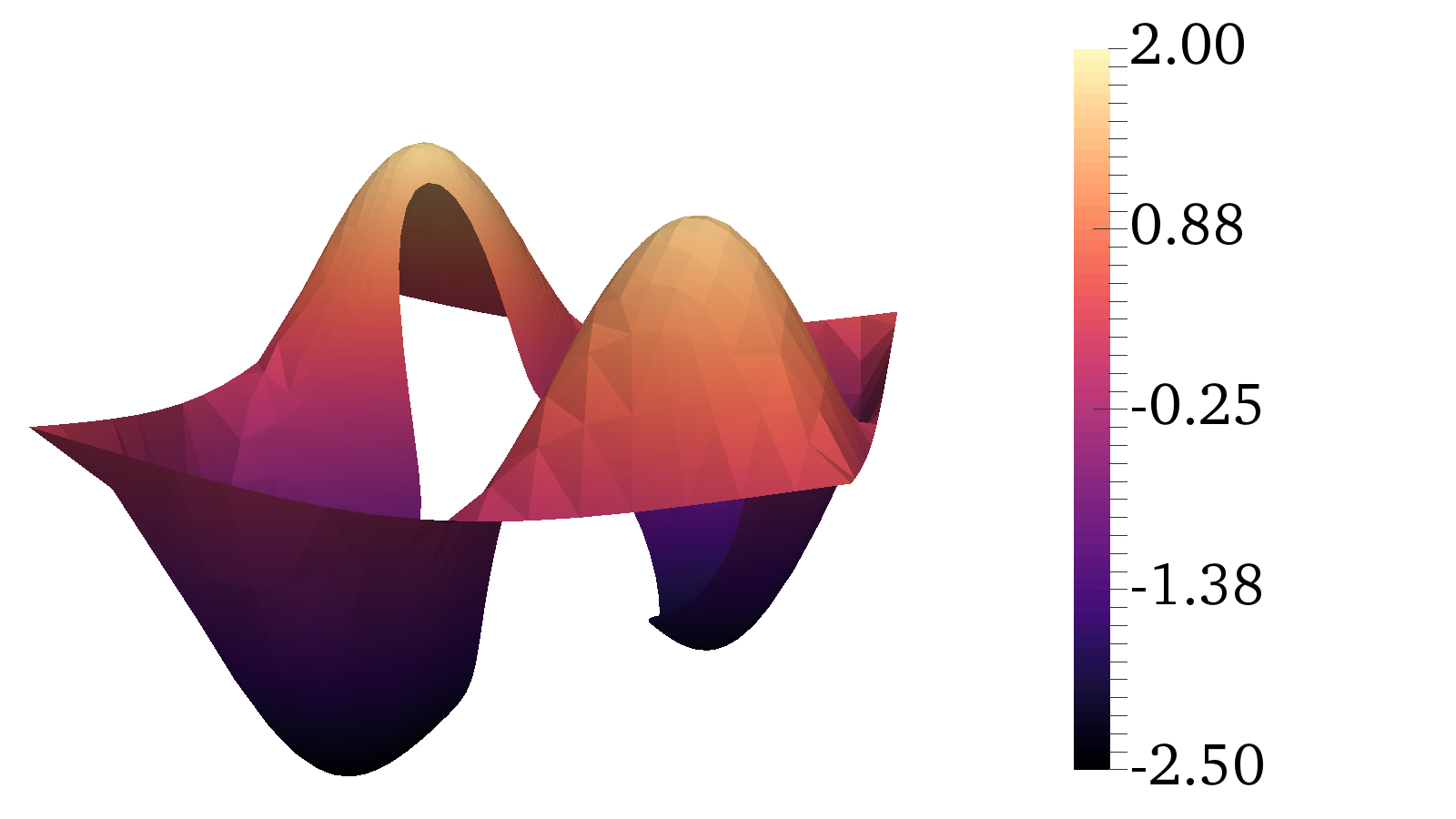}
  \caption{Total solution.}
\end{center}
\end{subfigure}
\caption{Individual and combined solutions on the true domain for the exterior embedded mesh problem.}~\label{fig:int-ext-solution}
\end{center}
\end{figure}
\ifpaper{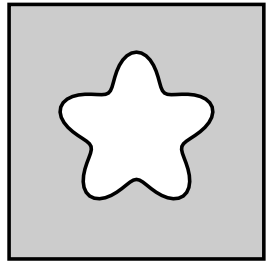
\begin{figure}[!ht]
  \begin{center}
  \resizebox{0.22\textwidth}{!}{      \begin{tikzpicture}
        \draw [fill=gray!40,thick](-1, -1) rectangle (1,1);
        \draw[thick, fill=white] plot file{starfish-curve.dat} -- cycle;
      \end{tikzpicture}}
      \caption{Starfish domain excluded from $\hat\Omega = [-1, -1] \times [-1, 1]$}\label{fig:starfish-ext}
  \end{center}
\end{figure}
}

\begin{table}
\begin{center}
\caption{Convergence of coupled interior-exterior FE-IE system for the excluded starfish domain.}
\label{tab:int-ext-conv-starfish}
\begin{tabular}{ c c  c c c c c }
\toprule
$p$ & $\pqbx$ & $\hfe$, $\hie$ & $\autoinfnorm{\hat\Omega\backslash\Omega}{\text{error}}$ & order &  $\| \text{error} \|_{L^2(\hat\Omega\backslash\Omega)}$ & order \\
\midrule
\multirow{4}{*}{1} & \multirow{4}{*}{2} & 0.133, 0.327 & 5.80\ee{-2} & --  & 2.38\ee{-2}  & -- \\
                     &                      & 0.067, 0.170 & 1.28\ee{-2} & 2.2 & 6.08\ee{-3} & 2.0 \\
                     &                      & 0.033, 0.085 & 3.47\ee{-3} & 1.9 & 1.47\ee{-3} & 2.0 \\
                     &                      & 0.017, 0.043 & 8.46\ee{-4} & 2.0 & 3.74\ee{-4} & 2.0 \\
\midrule
\multirow{4}{*}{2} & \multirow{4}{*}{3} & 0.133, 0.327 & 1.21\ee{-2} & --  & 2.05\ee{-3}  & -- \\
                     &                      & 0.067, 0.170 & 2.82\ee{-3} & 2.1 & 3.29\ee{-4} & 2.6 \\
                     &                      & 0.033, 0.085 & 5.53\ee{-4} & 2.3 & 3.70\ee{-5} & 3.2 \\
                     &                      & 0.017, 0.043 & 6.48\ee{-5} & 3.1 & 3.65\ee{-6} & 3.3 \\
\midrule
\multirow{4}{*}{3} & \multirow{4}{*}{4} & 0.133, 0.327 & 1.00\ee{-2} & --  & 1.12\ee{-3}  & -- \\
                     &                      & 0.067, 0.170 & 1.73\ee{-3} & 2.5 & 9.97\ee{-5} & 3.5 \\
                     &                      & 0.033, 0.085 & 1.91\ee{-4} & 3.2 & 7.19\ee{-6} & 3.8 \\
                     &                      & 0.017, 0.043 & 1.47\ee{-5} & 3.7 & 4.08\ee{-7} & 4.1 \\
\bottomrule
\end{tabular}
\end{center}
\end{table}

\section{FE-IE for Interface Problems}\label{sec:interface}
In this section,
we combine the elements described in
Sections~\ref{sec:int-poisson} and~\ref{sec:int-ext} to form a new
embedded boundary method for the interface problem
(\ref{eq:interface}).  In our approach, a fictitious
domain $\hat\Omega^i$ is introduced so that $\Omega^i \subset \hat\Omega^i$,
defining $\hat\Omega^e$ as $\Omega^e \cup \Omega^i$. Then the
problem is separated into two subproblems with an appropriate FE-IE splitting on
each.  This is illustrated in Figure~\ref{fig:interface-schematic}.
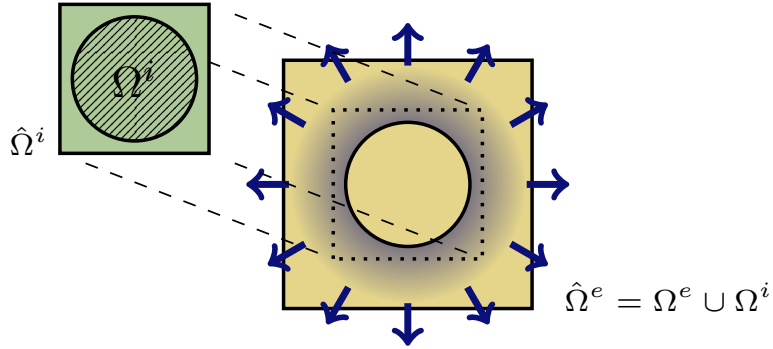
\begin{figure}[ht!]
\begin{center}
 \resizebox{0.65\textwidth}{!}{
  \begin{tikzpicture}
 \def\centrx{-2.2};
  \def\centry{0.85};
    \def\radlen{1.3};
           \draw [fill=\extHatColor!60, thick](-1,-1) rectangle (1,1);
        \draw[->, ultra thick, \extArrowColor] (0,0) -- (\radlen, 0);
    \draw[->,  ultra thick, \extArrowColor] (0,0) -- (\radlen*0.866, 0.5*\radlen);
    \draw[->,  ultra thick, \extArrowColor] (0,0) -- (0.5*\radlen, \radlen*0.866);
    \draw[->,  ultra thick,  \extArrowColor] (0,0) -- (0, \radlen);
    \draw[->,   ultra thick, \extArrowColor] (0,0) -- (-\radlen*0.866, 0.5*\radlen);
    \draw[->,  ultra thick,  \extArrowColor] (0,0) -- (-0.5*\radlen, \radlen*0.866);
    \draw[->,  ultra thick,  \extArrowColor] (0,0) -- (-\radlen, 0);
    \draw[->,  ultra thick,  \extArrowColor] (0,0) -- (-\radlen*0.866, -0.5*\radlen);
    \draw[->,  ultra thick,  \extArrowColor] (0,0) -- (-0.5*\radlen, -\radlen*0.866);
    \draw[->, ultra thick,   \extArrowColor] (0,0) -- (0, -\radlen);
    \draw[->, ultra thick,  \extArrowColor] (0,0) -- (\radlen*0.866, -0.5*\radlen);
    \draw[->,  ultra thick,  \extArrowColor] (0,0) -- (0.5*\radlen, -\radlen*0.866);
   \filldraw[\extHatColor!60, even odd rule,inner color=\extArrowColor,outer color=\extHatColor!60] (0, 0) circle [radius=0.95];
        \draw [fill=\extHatColor!60, thick] (0., 0.) circle [radius=0.5];
          \draw [-, dotted, thick](- 0.6, - 0.6) rectangle (0.6, 0.6);
    \node at (2.1, -0.9) {\scriptsize$\hat\Omega^e = \Omega^e \cup \Omega^i$};
      \node (1) at (-0.6, 0.6) {};
   \node (2) at (0.6, 0.6) {};
   \node (3) at (0.6, -0.6) {};
   \node (4) at (-0.6, -0.6) {};
     \node (5) at (\centrx -0.6, \centry + 0.6) {};
   \node (6) at (\centrx + 0.6, \centry + 0.6) {};
   \node (7) at (\centrx + 0.6, \centry - 0.6) {};
   \node (8) at (\centrx - 0.6, \centry - 0.6) {};
       \draw (1) [ -, dashed] to  (5);
      \draw (2) [ -, dashed] to  (6);
      \draw (3) [ -, dashed] to  (7);
      \draw (4) [ -, dashed] to  (8);

      \draw [fill=\intHatColor!40,thick](\centrx - 0.6,\centry - 0.6) rectangle (\centrx + 0.6,\centry + 0.6);
    \draw[pattern=\intPatternType, thick, pattern color=\intPatternColor] (\centrx, \centry) circle [radius=0.5];
     \node at (\centrx, \centry) {$\Omega^i$};
     \node at (\centrx - 0.85, \centry -0.5) {\scriptsize$\hat\Omega^i$};

      \end{tikzpicture}}
   \caption{Coupled subproblems for an embedded interface.}\label{fig:interface-schematic}
\end{center}
\end{figure}

There are four components to the combined solution: $u_1^i :
\hat\Omega^i\to \mathbb R$ and $u_2^i : \Omega^i \to \mathbb R$
for the interior solution, plus $u_1^e : \hat\Omega^e \to \mathbb R $
and $u_2^e: \mathbb{R}^d\backslash\Omega^i\to \mathbb R$ for the exterior solution.

As before, $u_1^i$ and $u_1^e$ represent the finite element components
of the solution.
For the interior and exterior integral equation solutions,
we choose the combined representation
\[
  u_2^i = \alpha_1 \DblOp\dens^i + \alpha_2 \SnglOp\dens^e,
  \quad
  \text{and}
  \quad
  u_2^e = \alpha_3 \DblOp\dens^i + \alpha_4 \SnglOp\dens^e,
\]
for some constant coefficients $\alpha_j$.  We next determine
$\alpha_j$ to ensure that all integral operators are of the second
kind.

Taking the limits of these expressions at the interface and adding the interface restrictions of the
finite element solutions gives the following form for the interface conditions:
\begin{equation}
  \Rop u_1^i  +   \alpha_1 \left(\DblOp_{-}\right) \dens^i
              +   \alpha_2 \left(\SnglOp_{-}\right) \dens^e =
c \dRop u_1^e + c \alpha_3 \left(\DblOp_{+}\right) \dens^i
              + c \alpha_4 \left(\SnglOp_{+}\right)\dens^e + a(x),
\end{equation}
and
\begin{equation}
\dRop u_1^i       + \alpha_1\partial_n \left(\DblOp_{-}\right) \dens^i
                  + \alpha_2\partial_n \left(\SnglOp_{-}\right)\dens^e =
\kappa\dRop u_1^e + \kappa\alpha_3\partial_n  \left(\DblOp_{+}\right)\dens^i
                  + \kappa\alpha_4 \partial_n \left(\SnglOp_{+}\right)\dens^e + b(x),
\end{equation}
where $\DblOp_{\pm}$ indicates the interior ($-$) or exterior ($+$) limit of the double-layer operator; similarly for $\SnglOp_{\pm}$.
The $\dRop$ operator is defined analogous to $\Rop$, but for gradient of the FE solution normal to the interface $\Gamma$.

\ifpaper{  We use the well-known jump conditions for layer potentials~\cite{Kress:2014} and collect the terms on each operator, which yields
 }
\begin{equation}\label{eq:collected-value-cond}
  \Rop u_1^i + \left[-\frac{\alpha_1 + c\alpha_3}{2}I + (\alpha_1 - c\alpha_3)\DblOpbd\right]\dens^i  =
      c\Rop u_1^e +  \left[-\alpha_2 + c\alpha_4\right]\SnglOpbd\dens^e + a(x)
\end{equation}
and
\begin{equation}\label{eq:collected-deriv-cond}
  \dRop u_1^i + \left[\frac{\alpha_2 + \kappa\alpha_4}{2}I + (\alpha_2 - \kappa\alpha_4)\SpOpbd\right]\dens^e
  = \kappa\dRop u_1^e + \left[ - \alpha_1 + \kappa\alpha_3\right] \DpOpbd\dens^i  + b(x).
\end{equation}

To determine suitable values for $\alpha_j$,  first we eliminate
the hypersingular operator $\DpOpbd$
from~\eqref{eq:collected-deriv-cond}, necessitating
\begin{equation}\label{eq:req-one}
\alpha_1 = \kappa\alpha_3.
\end{equation}
With $\DpOpbd$ eliminated,~\eqref{eq:collected-deriv-cond} is an equation for $\dens^e$ with operator
\[  \frac{\alpha_2 + \kappa\alpha_4}{2}I + (\alpha_2 - \kappa\alpha_4)\SpOpbd. \]
In order to have an operator with only the trivial nullspace, guaranteeing a unique solution for $\dens^e$, we
select the coefficients of $I$ and $\SpOpbd$ to have opposite sign.

Next consider the jump condition~\eqref{eq:collected-value-cond}.
Enforcing the requirement~\eqref{eq:req-one}, the operator on
$\dens^i$ is
\begin{equation}\label{eq:sigmaiop}
  -\frac{(\kappa + c)\alpha_3}{2}I + (\kappa - c)\alpha_3 \DblOpbd.
\end{equation}
This results in three possibilities based on $\kappa$ and $c$:
\begin{enumerate}
  \item $\kappa \neq c$ and $\kappa \neq -c$, where both terms remain,
  \item $\kappa = c$, where the double-layer term drops out, and
  \item $\kappa = -c$, where the identity term is eliminated.
\end{enumerate}

We consider each case in the following sections.
\subsection{The case \texorpdfstring{$\kappa \neq c$}{k!=c} and \texorpdfstring{$\kappa \neq -c$}{k!=c}}
Combining the interface condition equations \eqref{eq:collected-value-cond} and \eqref{eq:collected-deriv-cond} with
the condition \eqref{eq:req-one} and the interior and exterior FE problems for $u_1^i$ and $u_1^e$ yields
a coupled system for the interface problem:
with $\FEop^i(v)$ and $v \in H^1_0(\hat\Omega^i)$ for the interior problem and $\FEop^e(w)$
and $w \in H^1_0(\hat\Omega^e)$ for the
 coupled exterior problem, we seek $(\dens^i, \dens^e) \in C(\Gamma)$, $u_1^i \in H_0^1(\hat\Omega^i)$,
 $\tilde{u}_1^e \in H_0^1(\hat\Omega^e)$, and $\hat r_1 \in H^{1/2}(\partial\hat\Omega^e)$ such that
\begin{equation}\label{eq:generalmat-split}
 \mathcal{C}
 \begin{roomymat}{c}
   \dens^i \\
   \dens^e \\
   \hline
   u_1^i \\
  \tilde{u}_1^e\\
  \hat r_1^e  \end{roomymat}
    =
  \begin{roomymat}{c}
    a(x) \\
    b(x) \\
    \hline
    \mathcal{M}^i(v) \, f^i\\
   \mathcal{M}^e(w)\, f^e \\
    \hat g
\end{roomymat}
\qquad
\textnormal{for all $v \in H^1_0(\hat{\Omega}^i), w \in H_0^1(\hat{\Omega}^e)$,}
\end{equation}
where
\begin{equation}
  \mathcal{C} =
\begin{roomymat}{c c | c c c}
-\frac{1}{2}(\kappa+c)\alpha_3I + (\kappa - c)\alpha_3\DblOpbd &  (\alpha_2 - c\alpha_4)\SnglOpbd  & \Rop & -c\Rop & -c\Rop\lift^e  \\
 0         &   \frac{1}{2}(\alpha_2 + \kappa\alpha_4)I + (\alpha_2 - \kappa\alpha_4)\SpOpbd & \dRop & -\kappa\dRop & \kappa\dRop\lift^e \\
 \hline
 0 & 0 &  \FEop^i(v) & 0 & 0 \\
 0 & 0 &    0 & \FEop^e(w) & \FEop^e(w)\lift^e \\
 \alpha_3\RopHat^e\DblOp  &  \alpha_4\RopHat^e\SnglOp &   0 & 0  & I \\
 \end{roomymat}.
 \end{equation}
 The lifting operator $\lift^e$ is as described in Section \ref{sec:FE-IE-decomp} and acts on the boundary $\partial\hat\Omega^e$.  The source functions $f^i:\hat\Omega^i\to \mathbb R$ and $f^e:\hat\Omega^e \to \mathbb R$
are once again suitably restricted and/or extended versions of the
right-hand side $f$.
From this we see that
$a(x)$ and
$b(x)$ in the jump conditions are handled as additional
terms on the right-hand side of the system.

\subsubsection{A Solution Procedure Involving the Schur Complement}
For the exterior problem, we apply a Schur complement to~\eqref{eq:generalmat-split}.
To simplify notation, we apply~\eqref{eq:req-one} and define the block IE operator as
\begin{equation*}
  \tensor{\mathcal{I}} + \tensor{\IErepBdry} = \begin{bmatrix}-\frac{1}{2}(\kappa+c)\alpha_3I + (\kappa - c)\alpha_3\DblOpbd &  (\alpha_2 - c\alpha_4)\SnglOpbd  \\
    0            &   \frac{1}{2}(\alpha_2 + \kappa\alpha_4)I + (\alpha_2 - \kappa\alpha_4)\SpOpbd  \end{bmatrix}.
\end{equation*}
We also define the block coupling operator
\begin{equation*}
  \tensor{\Rop} =      \begin{bmatrix}    \Rop       &    -c\Rop  \\
                                              \dRop   & -\kappa\dRop  \end{bmatrix}.
\end{equation*}
As restriction to the outer boundary $\partial\hat\Omega^e$ is only necessary for $u_2^e$,
we do not need a block form of $\RopHat$.  Rather,
\[ \RopHat^e\IErep^e =\begin{bmatrix}\alpha_3\RopHat^e\DblOp  &  \alpha_4\RopHat^e\SnglOp \end{bmatrix} \]
 will suffice.

Finally, we define the
block FE solution operator $\tensor\FEsolve : L^2(\hat\Omega^i) \times L^2(\hat\Omega^e) \times   H^{1/2}(\partial\hat\Omega^e) \rightarrow H_0^1(\hat\Omega^i) \times H^1(\hat\Omega^e)$
 such that the $\mu^i \in H_0^1(\hat\Omega^i)$ and $\tilde{\mu}^e \in H^1_0(\hat\Omega^e)$
 defined by  $\hvectwo{\mu^i}{\tilde{\mu}^e + \lift^e\hat \rho} = \applyblockFEsolve{\zeta^i}{\zeta^e}{\hat\rho}$
 satisfy
\begin{align}
    \FEop^i(v) \mu^i &  = \mathcal{M}^i(v) \zeta^i &\forall v \in H_0^1(\hat\Omega^i), \label{eq:blockFEsolve} \\  \nonumber
    \FEop^e(w)(\tilde{\mu}^e + \lift^e\hat\rho) & = \mathcal{M}^e(w) \zeta^e  & \forall w \in H_0^1(\hat\Omega^e).
 \end{align}
We then write the equations for the density functions as
  \begin{equation}
    \left\{ \tensor{\mathcal{I}} + \tensor{\IErepBdry} -
    \tensor{\Rop}\,\tensor\FEsolve\begin{bmatrix}0 \\ 0 \\ \RopHat^e\IErep^e \end{bmatrix} \right \}
   \begin{roomymat}{c}
     \dens^i \\
     \dens^e \end{roomymat} = \begin{roomymat}{c}
    a(x) \\ b(x) \end{roomymat} -
    \tensor{\Rop}\,\tensor{\FEsolve}\begin{bmatrix}  f^i  \\ f^e \\ \hat{g}\end{bmatrix}.
     \label{eq:interface-split-solve-IE}
  \end{equation}
Once the densities  are known, the FE solutions are defined as $\applyblockFEsolve{f^i}{f^e}{\hat g - \RopHat^e\IErep^e}\vecdens$.

The equation~\eqref{eq:interface-split-solve-IE}
very closely resembles~\eqref{eq:densitywithFE} for the exterior case in Section
\ref{sec:int-ext}.  The main difference here---apart from the doubling
of the number of variables---is the appearance of $\dRop$
terms in $\tensor{\Rop}$ and thus in the operator on $\vecdens$ in (\ref{eq:interface-split-solve-IE}).  As the output from $\RopHat^e\IErep^e\vecdens$ is
smooth, however, the result of $\applyblockFEsolve{0 }{0}{\RopHat^e\IErep^e}\vecdens$ is smooth for most domains, as it is approximating a harmonic function for $u_1^e$ and will return $u_1^i = 0$ due to
the homogeneous boundary conditions enforced on $u_1^i$.  We expect this to mitigate any negative effects on the conditioning of the numerical system.

\subsection{The case \texorpdfstring{$\kappa = c$}{k=c}}

In this case, the $\DblOpbd$ term drops out of the operator on $\dens^i$.   We may choose $\alpha_2 = c\alpha_4 = \kappa\alpha_4$
which results in IE block in the form of a scaled identity:
\begin{equation}
 [\kappa = c \text{ case}] \qquad  \tensor{\mathcal{I}} + \tensor{\IErepBdry} = \begin{bmatrix}   -\frac{1}{2}(\kappa+c)\alpha_3 I  & 0  \\
                       0            &   \kappa\alpha_2 I  \end{bmatrix}.
 \end{equation}

\subsection{The case \texorpdfstring{$\kappa = -c$}{k=c}}

In this case, the operator on $\dens^i$ as defined
in~\eqref{eq:sigmaiop} loses the identity term, resulting in an
equation for $\dens^i$ which is not of the second kind.  We leave this
to future work.
\subsection{Numerical experiments}
We demonstrate the behavior of the method on two test
problems, one with $\kappa \neq c$ and one with $\kappa = c$.
The choices for the coefficients of the layer potential
representation are summarized in Table~\ref{tab:alphas}.

\begin{table}[!ht]
\begin{center}
  \caption{Choices of coefficients for numerical experiments}\label{tab:alphas}
  \begin{tabular} {r c c c c c }
   \toprule
Case             & $\alpha_1$       & $\alpha_2$ & $\alpha_3$       & $\alpha_4$ \\
                 \midrule
$\kappa \neq c$: & $\kappa\alpha_3$ & 0          & $1/(\kappa - c)$ & $1/\kappa$ \\

$\kappa = c$:    & $\kappa\alpha_3$ & 1          & $-1/\kappa$      & $1/\kappa$ \\

    \bottomrule
 \end{tabular}
\end{center}
\end{table}

\subsubsection{The quadratic-log test case}

In this manufactured-solution experiment,
the interface $\Gamma$ is a circle of radius $0.5$ and $\Omega^e \cup
\Omega^i$ is the square $[-1, -1] \times [-1, 1]$.  Relevant data for
this test case is given in Table~\ref{tab:testdata} with
the solution shown in Figure~\ref{fig:quadsine}. We note that the
right-hand side $f$ in this example exhibits a discontinuity, however
the reduction in convergence order discussed in
Section~\ref{sec:intro} does not occur because, by way of their given
expressions, both may be smoothly extended into the respective other
domain.
\begin{figure}[!ht]
    \centering
    \begin{subfigure}[b]{0.45\textwidth}
        \includegraphics[width=\textwidth]{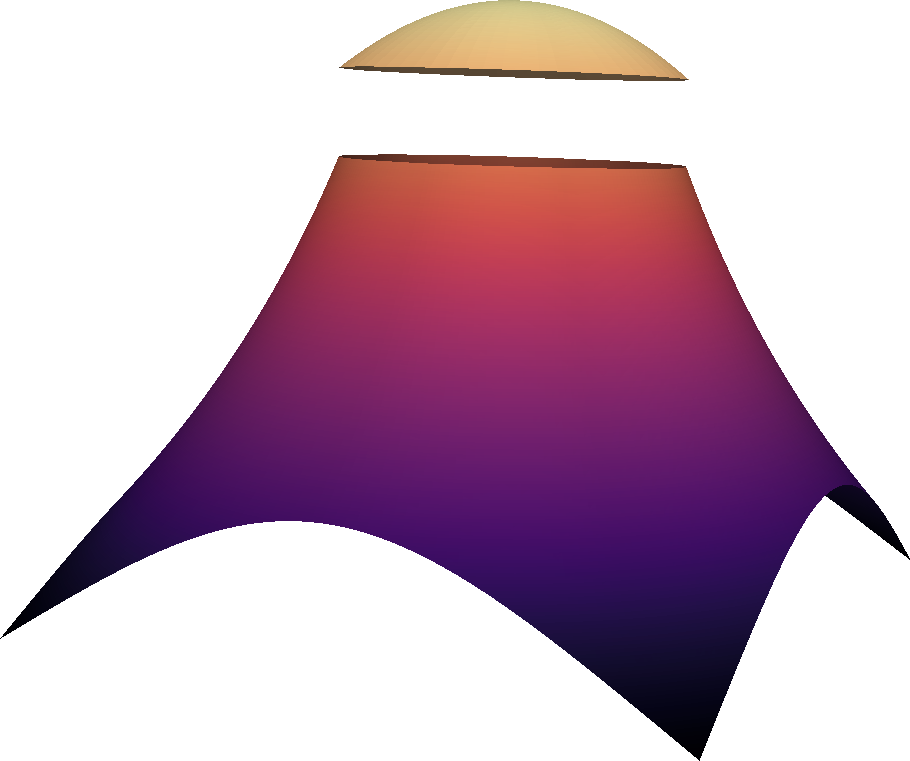}
    \caption{quadratic-log}\label{fig:quad-log}
    \end{subfigure}
    \hfill
    \begin{subfigure}[b]{0.45\textwidth}
        \includegraphics[width=\textwidth]{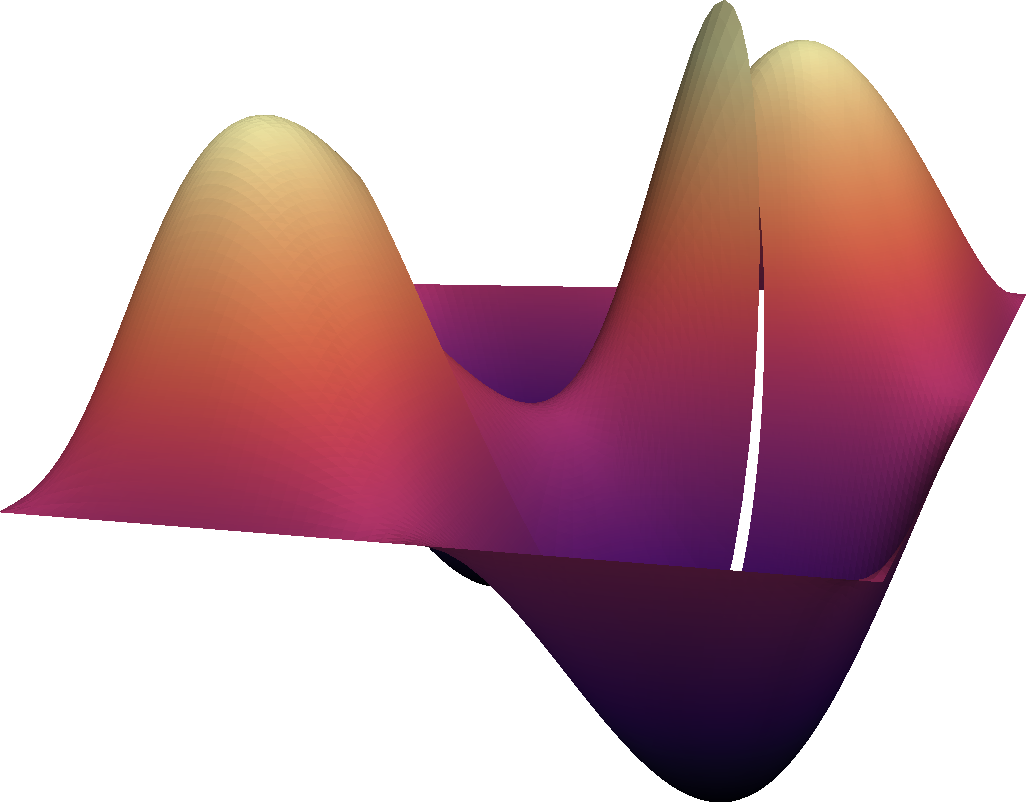}
  \caption{sine-linear}\label{fig:sine-sine}
    \end{subfigure}
    \caption{Numerical solutions for the test cases.}\label{fig:quadsine}
\end{figure}
\begin{table}
  \begin{tabular}{c | c c c | c c c | c c c c}
   \toprule
    \multirow{2}{*}{case} &
   \multicolumn{3}{c|}{interior} &
    \multicolumn{3}{c|}{exterior} &
    \multicolumn{4}{c}{interface} \\
      &
     $u$ & $f$ & $\hat{g}$ &
     $u$ & $f$ & $\hat{g}$ &
     $\kappa$ &$c$ & $a(x)$ & $b(x)$\\
     \midrule
quadratic-log &
   $-\frac{5}{6} r^2$ & $\frac{10}{3}$ & --- &
   $-\frac{5}{4}\log{\left(\frac{1}{2r}\right)} - \frac{11}{24}$ & 0 & $u^*$ &
   1/3 & 1 & 1/4 & 0   \\
sine-linear &
  $s_x s_y + x + y$ & $4\pi^2s_x s_y$ & ---   &
  $s_x s_y$         & $4\pi^2s_x s_y$ & $u^*$ &
  1 & 1 & $x + y$ & ${\bf 1} \cdot \nvec$ \\
   \bottomrule
  \end{tabular}
  \caption{Data for the quadratic-log and sine-linear test cases.  $s_x$ and $s_y$ denote
  $\sin{(2\pi x)}$ and $\sin{(2\pi y)}$.}\label{tab:testdata}
\end{table}

Convergence
is shown in
\ifpaper{Table~\ref{tab:quad-log-error}.  We}

achieve high-order convergence for the FE basis functions $p \geq 2$ and
attribute the lower convergence order to the fact that the
solution now depends on the FE derivative, and as a result
we expect to lose an order of convergence in the gradient representation compared to the
solution representation.

An interesting artifact arises in the convergence for $p = 1$: there is negative convergence between the coarsest and second-coarsest meshes.
This is due to the appearance of the derivative of the finite element
solution in the system.  For $p = 1$, the derivative of the FE
solution is piecewise constant.  On a coarse mesh, piecewise constants
poorly represent even smooth solutions with variation. As a result,
the $p = 1$ case is sensitive to element placement, especially
for coarse meshes.  Because of this effect we disregard data for $p=1$ and recommend using at least $p=2$.

\begin{table}[!ht]
\begin{center}
\caption{Convergence data for quadratic-log test problem with $\hat\Omega^i = [-0.6, -0.6] \times [0.6, 0.6]$}
\label{tab:quad-log-error}
\begin{tabular}{ c c c c c c c c }
\toprule
$p$ & $\pqbx$ & solution & $\hfe$, $\hie$ & $\infnormvol{\text{error}}$ & order &  $\| \text{error} \|_{L^2(\Omega)}$ & order \\
\midrule
\multirow{4}{*}{1} & \multirow{4}{*}{2} & \multirow{4}{*}{interior}  & 0.080, 0.224 & 6.93\ee{-3} & --  & 3.81\ee{-3}  & -- \\
                     &                      &                        & 0.040, 0.108 & 1.43\ee{-2} & -1.0 & 1.14\ee{-2} & -1.5 \\
                     &                      &                        & 0.020, 0.053 & 5.48\ee{-3} & 1.4 & 3.88\ee{-3} & 1.5 \\
                     &                      &                        & 0.010, 0.026 & 1.51\ee{-3} & 1.8 & 8.11\ee{-4} & 2.2 \\
\cline{3-8}
\multirow{4}{*}{1} & \multirow{4}{*}{2} & \multirow{4}{*}{exterior} & 0.080, 0.224 & 8.53\ee{-3} & --  & 5.61\ee{-3}  & -- \\
                     &                      &                       & 0.040, 0.108 & 1.51\ee{-2} & -0.8 & 1.05\ee{-2} & -0.9 \\
                     &                      &                       & 0.020, 0.053 & 5.34\ee{-3} & 1.5 & 3.43\ee{-3} & 1.6 \\
                     &                      &                       & 0.010, 0.026 & 1.48\ee{-3} & 1.8 & 7.23\ee{-4} & 2.2 \\
\midrule
\multirow{4}{*}{2} & \multirow{4}{*}{3} & \multirow{4}{*}{interior}  & 0.080, 0.224 & 3.64\ee{-3} & --  & 2.57\ee{-3}  & -- \\
                     &                      &                        & 0.040, 0.108 & 6.41\ee{-4} & 2.4 & 3.88\ee{-4} & 2.6 \\
                     &                      &                        & 0.020, 0.053 & 7.57\ee{-5} & 3.0 & 4.34\ee{-5} & 3.1 \\
                     &                      &                        & 0.010, 0.026 & 1.05\ee{-5} & 2.8 & 5.64\ee{-6} & 2.9 \\
\cline{3-8}
\multirow{4}{*}{2} & \multirow{4}{*}{3} & \multirow{4}{*}{exterior} & 0.080, 0.224 & 5.14\ee{-3} & --  & 2.63\ee{-3}  & -- \\
                     &                      &                       & 0.040, 0.108 & 7.79\ee{-4} & 2.6 & 3.78\ee{-4} & 2.7 \\
                     &                      &                       & 0.020, 0.053 & 8.63\ee{-5} & 3.1 & 4.15\ee{-5} & 3.1 \\
                     &                      &                       & 0.010, 0.026 & 1.12\ee{-5} & 2.9 & 5.27\ee{-6} & 2.9 \\
\midrule
\multirow{4}{*}{3} & \multirow{4}{*}{4} & \multirow{4}{*}{interior}  & 0.080, 0.224 & 7.95\ee{-4} & --  & 4.22\ee{-4}  & -- \\
                     &                      &                        & 0.040, 0.108 & 8.04\ee{-5} & 3.1 & 3.83\ee{-5} & 3.3 \\
                     &                      &                        & 0.020, 0.053 & 6.34\ee{-6} & 3.6 & 2.75\ee{-6} & 3.7 \\
                     &                      &                        & 0.010, 0.026 & 4.52\ee{-7} & 3.8 & 1.91\ee{-7} & 3.8 \\
\cline{3-8}
\multirow{4}{*}{3} & \multirow{4}{*}{4} & \multirow{4}{*}{exterior} & 0.080, 0.224 & 1.16\ee{-3} & --  & 4.77\ee{-4}  & -- \\
                     &                      &                       & 0.040, 0.108 & 9.99\ee{-5} & 3.4 & 3.99\ee{-5} & 3.4 \\
                     &                      &                       & 0.020, 0.053 & 7.13\ee{-6} & 3.7 & 2.77\ee{-6} & 3.8 \\
                     &                      &                       & 0.010, 0.026 & 4.80\ee{-7} & 3.8 & 1.87\ee{-7} & 3.8 \\
\bottomrule
\end{tabular}
\vspace*{0.2in}
\begin{tabular}{c c}
    \resizebox{1.8in}{!}{\begin{tikzpicture}
   \path [fill=white](-1.2,-1) rectangle (2.3,1.4);
    \draw [fill=\intHatColor!40,thick]( - 0.6, - 0.6) rectangle (0.6,  0.6);
    \draw[pattern=\intPatternType, thick, pattern color=\intPatternColor] (0, 0) circle [radius=0.5];
     \node at (0, -0.8) {\scriptsize interior};
     \end{tikzpicture}}    &           \resizebox{1.8in}{!} {\begin{tikzpicture}
     \path[fill=white](-1.2,-1) rectangle(2.3, 1.4);
       \draw [fill=\extHatColor!60, thick](-1,-1) rectangle (1,1);
       \path [pattern=\intPatternType, pattern color=\intPatternColor](-1,-1) rectangle (1,1);
     \draw [fill=\extHatColor!60, thick] (0., 0.) circle [radius=0.5];
    \node at (1.7, -0.8) {\scriptsize exterior};
    \end{tikzpicture}}  \\
  \bottomrule
\end{tabular}
\end{center}
\end{table}

\subsubsection{The sine-linear test case}

Next, we consider a test case for which $\kappa = c$; its data is summarized
in Table~\ref{tab:testdata} and shown for a circular interface
in Figure~\ref{fig:sine-sine}.  This is once again a manufactured solution with
the same forcing function $f$ through
$\Omega^i \cup \Omega^e$.  The extra linear function added to the
interior solution influences the numerical system only through the
non-homogeneous jump conditions $a(x)$ and $b(x)$.
Convergence data is shown in
\ifpaper{Table~\ref{tab:sine-sine-starfish-error} for the starfish interface.}

Again, we see high-order convergence.

\begin{table}[!ht]
\begin{center}
\caption{Convergence data for sine-linear test problem with $\Omega^i = $ starfish curve \eqref{eq:starfish}}
\label{tab:sine-sine-starfish-error}
\begin{tabular}{ c c c c c c c c }
\toprule
$p$ & $\pqbx$ & solution & $\hfe$, $\hie$ & $\infnormvol{\text{error}}$ & order &  $\| \text{error} \|_{L^2(\Omega)}$ & order \\
\midrule
\multirow{4}{*}{1} & \multirow{4}{*}{2} & \multirow{4}{*}{interior}  & 0.080, 0.327 & 7.43\ee{-2} & --  & 1.66\ee{-2}  & -- \\
                     &                      &                        & 0.040, 0.170 & 5.54\ee{-3} & 3.7 & 1.27\ee{-3} & 3.7 \\
                     &                      &                        & 0.020, 0.085 & 1.37\ee{-3} & 2.0 & 3.17\ee{-4} & 2.0 \\
                     &                      &                        & 0.010, 0.043 & 3.63\ee{-4} & 1.9 & 7.87\ee{-5} & 2.0 \\
\cline{3-8}
\multirow{4}{*}{1} & \multirow{4}{*}{2} & \multirow{4}{*}{exterior} & 0.080, 0.327 & 7.32\ee{-2} & --  & 2.63\ee{-2}  & -- \\
                     &                      &                       & 0.040, 0.170 & 5.63\ee{-3} & 3.7 & 3.08\ee{-3} & 3.1 \\
                     &                      &                       & 0.020, 0.085 & 1.45\ee{-3} & 2.0 & 7.28\ee{-4} & 2.1 \\
                     &                      &                       & 0.010, 0.043 & 3.48\ee{-4} & 2.0 & 1.93\ee{-4} & 1.9 \\
\midrule
\multirow{4}{*}{2} & \multirow{4}{*}{3} & \multirow{4}{*}{interior}  & 0.080, 0.327 & 2.11\ee{-3} & --  & 3.91\ee{-4}  & -- \\
                     &                      &                        & 0.040, 0.170 & 3.41\ee{-4} & 2.6 & 2.64\ee{-5} & 3.9 \\
                     &                      &                        & 0.020, 0.085 & 1.76\ee{-5} & 4.3 & 2.69\ee{-6} & 3.3 \\
                     &                      &                        & 0.010, 0.043 & 2.90\ee{-6} & 2.6 & 6.97\ee{-7} & 1.9 \\
\cline{3-8}
\multirow{4}{*}{2} & \multirow{4}{*}{3} & \multirow{4}{*}{exterior} & 0.080, 0.327 & 3.01\ee{-3} & --  & 4.55\ee{-4}  & -- \\
                     &                      &                       & 0.040, 0.170 & 1.29\ee{-4} & 4.5 & 3.26\ee{-5} & 3.8 \\
                     &                      &                       & 0.020, 0.085 & 1.12\ee{-5} & 3.5 & 3.98\ee{-6} & 3.0 \\
                     &                      &                       & 0.010, 0.043 & 2.87\ee{-6} & 2.0 & 8.99\ee{-7} & 2.1 \\
\midrule
\multirow{4}{*}{3} & \multirow{4}{*}{4} & \multirow{4}{*}{interior}  & 0.080, 0.327 & 2.92\ee{-3} & --  & 2.74\ee{-4}  & -- \\
                     &                      &                        & 0.040, 0.170 & 4.78\ee{-4} & 2.6 & 1.21\ee{-5} & 4.5 \\
                     &                      &                        & 0.020, 0.085 & 1.78\ee{-5} & 4.7 & 3.33\ee{-7} & 5.2 \\
                     &                      &                        & 0.010, 0.043 & 1.25\ee{-6} & 3.8 & 1.04\ee{-8} & 5.0 \\
\cline{3-8}
\multirow{4}{*}{3} & \multirow{4}{*}{4} & \multirow{4}{*}{exterior} & 0.080, 0.327 & 3.79\ee{-3} & --  & 3.44\ee{-4}  & -- \\
                     &                      &                       & 0.040, 0.170 & 1.88\ee{-4} & 4.3 & 9.34\ee{-6} & 5.2 \\
                     &                      &                       & 0.020, 0.085 & 1.30\ee{-5} & 3.9 & 2.43\ee{-7} & 5.3 \\
                     &                      &                       & 0.010, 0.043 & 5.24\ee{-7} & 4.6 & 7.44\ee{-9} & 5.0 \\
\bottomrule
\end{tabular}
\vspace*{0.2in}
\begin{tabular}{c c}
    \resizebox{1.8in}{!}{\begin{tikzpicture}
   \path [fill=white](-1.2,-1) rectangle (2.3,1.4);
    \draw [fill=\intHatColor!40,thick]( - 0.8, - 0.8) rectangle (0.8,  0.8);
    \draw[pattern=\intPatternType, thick, pattern color=\intPatternColor] plot file{starfish-curve.dat} -- cycle;
     \node at (0, -1.0) {\scriptsize interior};
     \end{tikzpicture}}    &           \resizebox{1.8in}{!} {\begin{tikzpicture}
     \path[fill=white](-1.2,-1) rectangle(2.3, 1.4);
       \draw [fill=\extHatColor!60, thick](-1,-1) rectangle (1,1);
       \path [pattern=\intPatternType, pattern color=\intPatternColor](-1,-1) rectangle (1,1);
     \draw [fill=\extHatColor!60, thick] (0., 0.) plot file{starfish-curve.dat} -- cycle;
    \node at (1.7, -0.8) {\scriptsize exterior};
    \end{tikzpicture}}  \\
  \bottomrule
\end{tabular}
\end{center}
\end{table}

\subsection{Numerical considerations}
We consider two approaches for solving the coupled four-variable linear
system: solving the full system together, or implementing the
Schur complement based procedure described
in~\eqref{eq:blockFEsolve}--\eqref{eq:interface-split-solve-IE}.
In the following, the total $4\times 4$ system was preconditioned with
a block preconditioner on the FE blocks; the preconditioner used was
smoothed aggregation~AMG from \texttt{pyamg}~\cite{pyamg-github}.  For
the Schur complement solve, the action of inverting the FE operators was implemented
by separating the Dirichlet nodes to regain a symmetric positive
definite system for the interior points.  This inner solve then
used preconditioned CG\@.  The total computational work in the Schur
complement
solve depends on both the number of outer GMRES iterations for the
densities and inner iterations on the FE solutions, carried out every time
the action of $\tensor{\FEsolve}$ is required as part of the outer
operator.  Considering the structure of the FE block of the coupled operator $\mathcal{C}$
from \eqref{eq:generalmat-split}
and the definition of the solution operator $\tensor\FEsolve$ in \eqref{eq:blockFEsolve},
the inner and exterior FE problems are only coupled through the total system \eqref{eq:interface-split-solve-IE}; we
choose to solve the interior and exterior FE problems separately in each application of $\tensor\FEsolve$.

\ifpaper{
  To illustrate the effectiveness of the described solution procedure
  and the effectiveness of our preconditioning, we show the number of
  required inner and outer iterations for the
  quadratic-log test case in Figure~\ref{fig:quad-log-split-its}.  Due
  to the well-conditioned nature of the second-kind integral equation
  discretizations, the number of outer iterations remains nearly
  constant as the mesh spacing $\hie$ decreases.  The total number of interior and
  exterior inner iterations increases with both decreasing mesh size and
  increasing order of basis functions, which is consistent with the
  expected behavior of standard finite element discretizations.
    \begin{center}
    \begin{figure}[!ht]
    \begin{center}
    \includegraphics[width=0.68\textwidth]{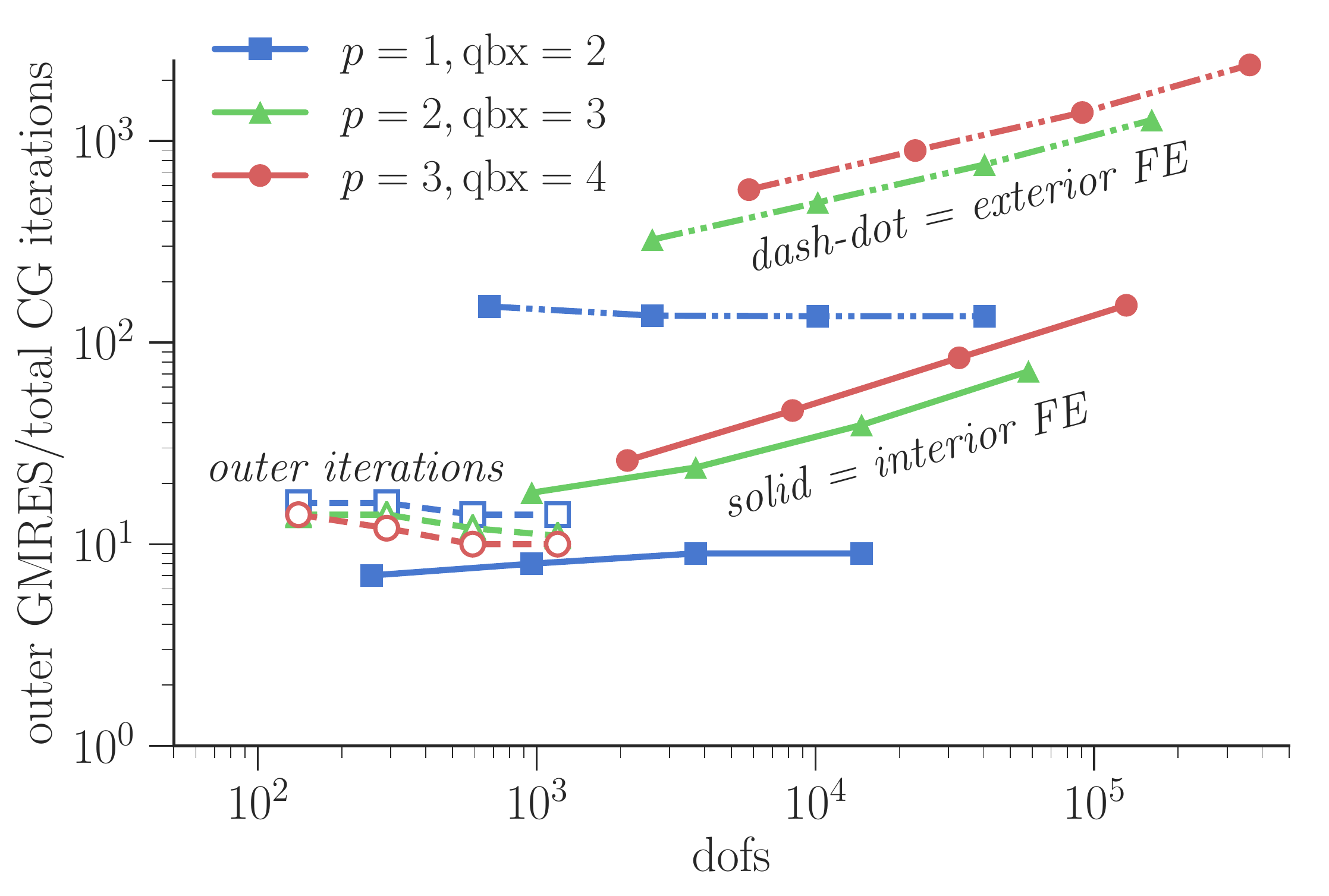}
    \caption{Outer GMRES(20) iterations to solve \eqref{eq:interface-split-solve-IE} and
    inner preconditioned CG iterations of interior/exterior FE problems
     for the quadratic-log problem.}
    \label{fig:quad-log-split-its}
    \end{center}
    \end{figure}
  \end{center}
}

  \section{Conclusions}
We have demonstrated a method of coupling finite-element and integral equation solvers for the
strong enforcement of boundary conditions on interior and exterior embedded boundaries.  Furthermore, we have
introduced a new method of coupling these FE-IE subproblems to solve a wide class of interface problems with
homogeneous and non-homogeneous jump conditions.
Our method does not require any special modifications to the finite element
basis functions.  It also does not need volume mesh refinement around the embedded domain,
which means that time-dependent domains will not necessitate modifications to the volume-based finite element
matrices---only the surface-based integral equation discretizations and the coupling matrices would be updated.
One benefit is that our method can be implemented with off-the-shelf FE and IE packages.  We have shown theoretical error bounds in the
case of the interior embedded mesh problem and achieved empirical high-order convergence for interior,
exterior, and interface examples, even very close to the embedded boundary.

\section*{Acknowledgments}

Portions of this work were sponsored by the Air Force Office of
Scientific Research under grant number FA9550–12–1–0478, and by the
National Science Foundation under grant number CCF-1524433.
Part of the work was performed while NB and AK were participating in
the HKUST-ICERM workshop `Integral Equation Methods, Fast
Algorithms and Their Applications to Fluid Dynamics and Materials
Science' held in 2017.

\bibliography{FErefs}

\end{document}